\documentclass[10pt, a4paper]{amsart} 
\usepackage[left=2.50cm, right=2.50cm, top=2.50cm, bottom=2.50cm]{geometry} 
\usepackage{amscd, amsfonts, amsmath, amssymb, amsthm, arydshln, fixmath, graphicx, mathrsfs, overpic, tikz, tikz-cd}
\usepackage{hyperref}
\usepackage[all]{xy} 
\usepackage{tikz}
\usepackage{pgfplots}
\usepackage{pgflibraryarrows}
\usepackage{pgflibrarysnakes}
\usepackage{dsfont}
\makeindex

\theoremstyle{definition}
\newtheorem{Unity}{Unity}[section] 
\newtheorem*{Definition*}{Definition} 
\newtheorem{Definition}[Unity]{Definition}

\theoremstyle{plain} 
\newtheorem*{Theorem*}{Theorem}
\newtheorem{Theorem}[Unity]{Theorem}
\newtheorem{Proposition}[Unity]{Proposition}
\newtheorem{Corollary}[Unity]{Corollary}
\newtheorem{Lemma}[Unity]{Lemma}

\theoremstyle{remark} 
\newtheorem*{Remark*}{Remark}
\newtheorem{Remark}[Unity]{Remark}

\numberwithin{Unity}{section}

\newcommand{\cC}{\mathcal{C}}

\newcommand{\cO}{\mathcal{O}}

\newcommand{\Spec}{\mathrm{Spec\,}}

\newcommand{\Vect}{\mathfrak{Vect}}

\begin{document}

\title{The Birational Invariance of Fundamental Group Schemes}
\author{Lingguang Li}
\address{School of Mathematical Sciences,
Key Laboratory of Intelligent Computing and Applications (Tongji University), Ministry of Education, Shanghai 200092, CHINA}
\email{LiLg@tongji.edu.cn}
\author{Hao Wang}
\address{School of Mathematical Sciences,
Key Laboratory of Intelligent Computing and Applications (Tongji University), Ministry of Education, Shanghai 200092, CHINA}
\email{wanghao0410@tongji.edu.cn}

\begin{abstract}
Let $k$ be a field, $f \colon X \to Y$ a birational morphism of integral connected schemes proper over $k$ with $Y$ normal,  $x \in X(k)$ lying over $y \in Y(k)$. For Tannakian categories $\mathcal{C}_X \subset \mathfrak{Vect}(X)$ and $\mathcal{C}_Y \subset \mathfrak{Vect}(Y)$, denote by $\pi(\mathcal{C}_X,x)$ and $\pi(\mathcal{C}_Y,y)$ the corresponding Tannaka group schemes. We establish a unified Tannakian criteria for the natural homomorphism $\pi(\mathcal{C}_X,x)\to \pi(\mathcal{C}_Y,y)$ to be an isomorphism. As applications, for a birational map $X \dashrightarrow Y$ between smooth projective varieties
over a perfect field $k$, we prove that  there exists a natural isomorphism $\pi^{*}(X,x)\cong \pi^{*}(Y,y)$ for any $* \in \{S,N,EN,F,EF,Loc,ELoc,\acute{e}t, E\acute{e}t,uni\}$. In particular, we prove that  the induced homomorphism $\pi^{str}(X,x)\to \pi^{str}(Y,y)$ is an isomorphism for any birational morphism $ X \rightarrow Y$.
\end{abstract}

\maketitle
\tableofcontents

\section{Introduction}

Fundamental group schemes provide an algebro-geometric refinement of the classical fundamental group. For a connected scheme $X$ proper over a field $k$ endowed with a rational point $x \in X(k)$, a neutral Tannakian subcategory of $\Vect(X)$ determines an affine $k$-group scheme through the fibre functor at $x$. In this way, different classes of vector bundles give rise to different fundamental group schemes. This Tannakian formalism is effective in positive characteristic and over non-algebraically closed fields.

Nori introduced the fundamental group scheme $\pi^{N}(X,x)$,
defined as the Tannaka dual of the category of essentially finite bundles, and the unipotent
fundamental group scheme $\pi^{uni}(X,x)$, attached to the category of unipotent vector bundles
\cite{Nor82}. In positive characteristic, Mehta and Subramanian introduced the local fundamental group
scheme $\pi^{Loc}(X,x)$ by means of Frobenius trivial vector bundles \cite{MeSu08}, Dos Santos introduced the stratified fundamental group scheme $\pi^{str}(X,x)$, defined as the Tannaka dual
of the category of stratified bundles (equivalently, $F$-divisible bundles) \cite{dosSantos07}. Amrutiya and
Biswas introduced the F-fundamental group scheme $\pi^{F}(X,x)$ from the Tannakian subcategory
generated by Frobenius finite bundles \cite{AmBi10}. Langer introduced the S-fundamental group
scheme $\pi^{S}(X,x)$, defined by the category of numerically flat vector bundles \cite{Lan11}. In
characteristic zero, Otabe introduced the EN-fundamental group scheme $\pi^{EN}(X,x)$ through the
category of semi-essentially finite bundles \cite{Ota17}. More recently, Adroja and Amrutiya
introduced the extended local fundamental group scheme $\pi^{ELoc}(X,x)$ by using Frobenius
semi-trivial vector bundles \cite{AdAm25}.
The \'etale
fundamental group scheme $\pi^{\acute{e}t}(X,x)$, associated with the category of \'etale trivializable vector
bundles,  has its origin in Grothendieck's 
\'etale fundamental group \cite{Gro60}.

A basic problem is whether these fundamental group schemes are invariant under birational modifications. For \'etale fundamental group, Grothendieck proved the birational invariance for regular schemes proper over an arbitrary field \cite{Gro60}. For the S-fundamental group scheme, Hogadi and Mehta proved birational invariance for smooth projective varieties over an algebraically closed field of positive characteristic \cite{HM11}. For the F-fundamental group
scheme, Amrutiya  proved birational invariance for smooth projective varieties over a perfect
field of positive characteristic \cite{Am20}. Adroja and Amrutiya also obtained birational invariance results for the local, Nori and extended Nori  fundamental group schemes for smooth projective varieties over an algebraically closed field of positive characteristic \cite{AdAm25}. The aim of this paper is to establish a uniform Tannakian criterion for a birational morphism to induce an isomorphism on the corresponding fundamental group schemes, and then apply this criterion to prove birational invariance for all the fundamental group schemes listed above over an arbitrary perfect field.

Throughout the paper, we assume that $k$ is a field, $f: X \rightarrow Y$ be a morphism between connected schemes proper over $k$, $x \in X(k)$ lying over $y \in Y(k)$. Let $\cC_X $ (resp. $\cC_Y $) be the Tannakian category whose objects consist of vector bundles over $X$ (resp. $Y$)  with neutral fibre functor $\omega_x$ (resp. $\omega_y$). Denote by $\pi(\cC_X,x)$ and $\pi(\cC_Y,y)$ the corresponding Tannaka group schemes. The pullback induces a functor $f^*: \cC_Y \rightarrow \cC_X$ and a canonical
homomorphism  $ \pi(\cC_X,x)\to \pi(\cC_Y,y)$ of Tannaka group schemes.

\begin{Theorem}[Corollary~\ref{birmor}]
   Let $k$ be a field, $f \colon X \to Y$ a  birational morphism of integral schemes proper over $k$ with $Y$ normal, $x \in X(k)$ lying over $y \in Y(k)$. Then the following conditions are equivalent:
\begin{enumerate}
    \item $f^*$ induces an isomorphism
    $ \pi(\cC_X,x)\cong \pi(\cC_Y,y)$.
    \item $f^*$ and $f_*$ are quasi-inverse exact tensor functors between $\cC_X$ and $\cC_Y$.
    \item For any $E\in \cC_X$, we have
    $f_*E\in \cC_Y$.
\end{enumerate}  
\end{Theorem}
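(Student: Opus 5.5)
Our plan is to prove the cycle (2)$\Rightarrow$(1)$\Rightarrow$(3)$\Rightarrow$(2), keeping in mind throughout that the geometric input is $f_*\cO_X=\cO_Y$. This holds because $f$ is proper and birational and $Y$ is normal: $f_*\cO_X$ is a finite $\cO_Y$-algebra, contained in the function field $K(Y)=K(X)$, hence integral over the normal ring $\cO_Y$, hence equal to it. The projection formula then gives $f_*f^*F\cong F\otimes f_*\cO_X\cong F$ for every vector bundle $F$ on $Y$. So $f^*$ is fully faithful on $\Vect(Y)$, and in particular on $\cC_Y$.

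\textbf{(2)$\Rightarrow$(1).} If $f^*$ is an equivalence of tensor categories compatible with the fibre functors, $\omega_x\circ f^*=\omega_y$ (since $f(x)=y$), then Tannaka duality shows that the induced homomorphism is an isomorphism.

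\textbf{(1)$\Rightarrow$(3).} If $\pi(\cC_X,x)\to\pi(\cC_Y,y)$ is an isomorphism, the standard Tannakian criteria give that $f^*$ is fully faithful and essentially surjective: faithful flatness gives full faithfulness together with closure under subobjects, and closed immersion gives that every object is a subquotient of an image. Being an isomorphism, $f^*\colon\cC_Y\to\cC_X$ is therefore an equivalence. Hence every $E\in\cC_X$ is isomorphic to $f^*F$ for some $F\in\cC_Y$, and then $f_*E\cong f_*f^*F\cong F\in\cC_Y$ by the projection formula above.

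\textbf{(3)$\Rightarrow$(2).} This is the main step. For $E\in\cC_X$ we have $F:=f_*E\in\cC_Y$, so in particular $F$ is a vector bundle. Consider the adjunction map $\varepsilon\colon f^*F=f^*f_*E\to E$. It is an isomorphism over the dense open set $U\subset Y$ over which $f$ is an isomorphism. We must show that it is an isomorphism everywhere, and here we use the Tannakian structure. Since $f^*F\in\cC_X$, the map $\varepsilon$ is a morphism in the abelian category $\cC_X$, so its kernel and cokernel are again objects of $\cC_X$, hence vector bundles (locally free). The kernel is a locally free subsheaf that vanishes generically on the integral scheme $X$, so it is zero. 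The cokernel is locally free and generically zero, so it has rank $0$ and is therefore zero. Thus $\varepsilon$ is an isomorphism, and $f^*f_*\cong\mathrm{id}$ on $\cC_X$. Together with $f_*f^*\cong\mathrm{id}$ on $\cC_Y$, this shows that $f_*$ and $f^*$ are quasi-inverse equivalences. Consequently $f_*$ is exact and tensor, being quasi-inverse to the exact tensor functor $f^*$; concretely, $f_*(E\otimes E')\cong f_*(f^*F\otimes f^*F')\cong F\otimes F'$.

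The delicate point is the step in (3)$\Rightarrow$(2) using that kernels and cokernels computed in $\cC_X$ agree with sheaf kernels and cokernels. This requires the standing convention that $\cC_X$ is a full abelian subcategory of vector bundles whose kernels and cokernels are the sheaf-theoretic ones. We will invoke this convention, which is implicit in saying that $\cC_X\subset\Vect(X)$ is Tannakian with fibre functor $\omega_x$; exactness of $\omega_x$ forces constant rank. If the convention is unavailable, we would instead compare ranks: $\varepsilon$ is a map of vector bundles of equal rank that is an isomorphism generically. Then $\det\varepsilon$ is a section of a line bundle in the tensor category $\cC_X$, and in a Tannakian category such a nonzero map between rank-one objects is an isomorphism.
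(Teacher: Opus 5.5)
Your proposal is correct and is essentially the paper's argument. The paper also rests on $f_*\mathcal{O}_X=\mathcal{O}_Y$ (obtained via Stein factorization rather than your direct integrality argument), on the projection formula, and on the vanishing of the generically zero kernel and cokernel of $f^*f_*E\to E$ as objects of the abelian category $\mathcal{C}_X$; it only routes the remaining implications through Theorem~\ref{generalmor} instead of your direct cycle.

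Your determinant fallback is unnecessary, and in positive characteristic it silently reuses the same convention for exterior powers. A morphism of vector bundles on the integral scheme $X$ that vanishes on a dense open subset is zero, so $f^*f_*E\to E$ is both a monomorphism and an epimorphism in $\mathcal{C}_X$. It is therefore an isomorphism there, hence of sheaves, without needing kernels in $\mathcal{C}_X$ to be sheaf-theoretic.
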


In particular, when $f: X \rightarrow Y$ is a birational morphism (or birational map) between smooth projective varieties over $k$, using this theorem we can prove the birational invariance of certain fundamental group schemes.

\begin{Theorem}[Theorem~\ref{birinvariance} and Corollary~\ref{cor:stratified}]
Let $k$ be a perfect field, $f \colon X \dashrightarrow Y$
be a birational map between smooth projective varieties over $k$, $x \in X(k)$ lying over $y \in Y(k)$.  
Then there exists a natural isomorphism
$\pi^{*}(X,x)\cong \pi^{*}(Y,y)$ for any $* \in \{S, N, EN, F, EF, Loc, ELoc, \acute{e}t, E\acute{e}t, uni\}$.
Moreover, if $f$ is a birational morphism, then the induced homomorphism
$\pi^{str}(X,x)\rightarrow \pi^{str}(Y,y)$
is also an isomorphism.
\end{Theorem}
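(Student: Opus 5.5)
The plan is to apply the criterion of the first Theorem (Corollary~\ref{birmor}), i.e.\ the equivalence of (1) and (3). First I reduce the birational map to birational morphisms. Given $f\colon X\dashrightarrow Y$ between smooth projective varieties over a perfect field $k$, let $Z\subset X\times_k Y$ be the closure of the graph. Since $k$ is perfect, resolution is unavailable in general, so I will not try to make $Z$ smooth. I will only use that $Z$ is integral and proper, and that both projections $p\colon Z\to X$ and $q\colon Z\to Y$ are birational morphisms onto normal (indeed smooth) targets. The pair $(x,y)$ gives a rational point $z\in Z(k)$; this is how I read ``$x$ lying over $y$''.

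If for each type $*$ I show that $p$ and $q$ induce isomorphisms $\pi^*(Z,z)\cong\pi^*(X,x)$ and $\pi^*(Z,z)\cong\pi^*(Y,y)$, composing gives the natural isomorphism. So everything reduces to a birational morphism $g\colon Z\to W$ with $W$ smooth projective and $Z$ integral proper. By (3) of the first Theorem, it then suffices to check that $g_*E\in\cC^*_W$ for every $E\in\cC^*_Z$.

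The key geometric input is Zariski's main theorem in the form $g_*\cO_Z=\cO_W$, together with the fact that $g$ is an isomorphism over an open $U\subset W$ whose complement has codimension $\ge 2$. This holds because $W$ is normal, so the indeterminacy of $g^{-1}$ has codimension $\ge2$. For $E\in\cC^*_Z$, the sheaf $g_*E$ is torsion free and coherent, and its restriction to $U$ is a bundle. Let $V$ be the reflexive hull $j_*(E|_{g^{-1}U})$ on $W$. For each category I will show that $V$ is a vector bundle lying in $\cC^*_W$ and that $g^*V\cong E$; then $g_*E=g_*g^*V=V$ by the projection formula.

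The argument depends on the category:
\begin{itemize}
\item \emph{Numerically flat} ($S$): I follow Hogadi--Mehta. Restricting to curves in $Z$ mapping to curves through $W\setminus U$, I show $E$ is trivial on the fibres of $g$. Since $E$ is numerically flat, its restriction to each connected fibre is numerically flat with trivial determinant on a rationally chain connected fibre; on $W$ smooth the fibres over codimension $\ge2$ points are covered by rational curves (Abhyankar). This makes $E$ trivial on the fibres, so $E\cong g^*g_*E$ with $g_*E$ locally free. Numerical flatness descends because $g$ is surjective.
\item \emph{Finite-type categories} ($N,F,Loc,\acute{e}t$, and their $E$-versions): these are built from bundles trivialized by a torsor, namely a finite torsor, a Frobenius pullback, or an étale cover. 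All of them are numerically flat, so the $S$ case applies, and it remains to show the descended bundle is again of the given type. For étale trivializability I use Zariski--Nagata purity on $W$ to extend the torsor over $U$ to all of $W$. For $F$, $Loc$ and $N$ I use the equivalence $g^*$ on numerically flat bundles: the condition $F^{n*}E$ trivial, or $E$ finite in the Nori sense (a polynomial identity), descends along the fully faithful $g^*$.
\item \emph{Semi-versions} ($EN,EF,ELoc,E\acute{e}t$) and \emph{unipotent} bundles: these are defined by iterated extensions. An exact fully faithful $g^*$ that is essentially surjective on the generators preserves extensions, using $H^1(Z,g^*V)=H^1(W,V)$. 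That equality comes from the Leray sequence, $g_*\cO_Z=\cO_W$, and injectivity of $H^1$ (the $R^1g_*$ contribution only enters $H^2$).
\item \emph{Stratified} ($str$): for a birational morphism, a stratified bundle is a sequence $E_i$ with $F^*E_{i+1}=E_i$. Each $E_i$ is numerically flat, so it descends, and the descended bundles inherit the isomorphisms by full faithfulness. The map $g$ need not factor back, which is why only morphisms are asserted here.
\end{itemize}

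The main obstacle is the $S$ step: showing that a numerically flat bundle on $Z$ is trivial on the fibres of $g$ when $Z$ is singular and $k$ is not algebraically closed. I would first pass to $\bar k$, using that $g_*$ commutes with flat base change and that local freeness descends. Then I would run the Hogadi--Mehta argument on a resolution, or, lacking one, on normalizations of chains of rational curves in the fibres.
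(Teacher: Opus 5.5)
\textbf{There is a genuine gap: you do not normalize the graph closure.} You reduce everything to the claim that $g$ induces $\pi^*(Z,z)\cong\pi^*(W,w)$ for every birational $g\colon Z\to W$ with $Z$ merely integral and proper and $W$ smooth. That claim is false, and it already fails for graph closures of birational maps between smooth projective surfaces.

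Here is a counterexample. Over $k=\bar k$, let $\Gamma\subset X=\mathbb{P}^2$ be a nodal cubic with node $x$, and let $\sigma\colon\tilde Z\to X$ blow up $x$ and six further points of $\Gamma$. The strict transform $\Gamma'$ is a smooth rational curve with $\Gamma'^2=9-4-6=-1$. It meets $C=\sigma^{-1}(x)$ in two points $a\neq b$. Contracting $\Gamma'$ gives $\tau\colon\tilde Z\to Y$ with $Y$ smooth projective. No curve is contracted by both $\sigma$ and $\tau$, so $(\sigma,\tau)$ maps $\tilde Z$ finitely and birationally onto the graph closure $Z\subset X\times Y$, identifying exactly $a$ and $b$. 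Thus $Z$ is $\tilde Z$ with two points glued, and:
\begin{itemize}
\item $H^1(Z,\cO_Z)=k$ and $\pi_1^{\acute{e}t}(Z)\cong\hat{\mathbb{Z}}$;
\item the line bundle $L_\lambda$ obtained from $\cO_{\tilde Z}$ by gluing the fibres at $a$ and $b$ via $\lambda\neq 1$ is numerically flat, and étale trivializable if $\lambda$ is a root of unity of order invertible in $k$;
\item $L_\lambda$ is nontrivial on the reduced fibre $p^{-1}(x)$, which is a nodal rational curve and hence rationally chain connected;
\item $p_*L_\lambda$ is the ideal sheaf of $x$, which is not locally free.
\end{itemize}
So $\pi^*(Z,z)\to\pi^*(X,x)$ is not an isomorphism for $*=S,N,\acute{e}t,uni$, among others. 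Your $S$-step (``numerically flat with trivial determinant on a rationally chain connected fibre, hence trivial'') fails exactly on such fibres. The paper avoids this by taking the \emph{normalization} of the graph closure, so that Corollary~\ref{numericalinv}, whose Hogadi--Mehta input needs a normal source, applies to both projections.

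\textbf{Your Leray argument is also wrong.} You claim $H^1(Z,g^*V)=H^1(W,V)$. The five-term sequence is
$0\to H^1(W,g_*\mathcal F)\to H^1(Z,\mathcal F)\to H^0(W,R^1g_*\mathcal F)\to H^2(W,g_*\mathcal F)$.
So $R^1g_*$ does contribute to $H^1(Z,\cdot)$; in the example, $H^1(Z,\cO_Z)=H^0(X,R^1p_*\cO_Z)=k$. For the unipotent and extended categories you do not need this equality at all. $\cC^{NF}$ is closed under extensions, and once the $S$-case is known, $g_*$ is an exact quasi-inverse of $g^*$ on $\cC^{NF}$. Pushing a filtration forward then gives the required filtration on $W$, which is exactly Proposition~\ref{birinv:underS}.

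\textbf{The paper avoids fibrewise triviality.} Instead of deducing $E\cong g^*g_*E$ from triviality on fibres, the paper shows that $g_*E$ is numerically flat. It does this via Hogadi--Mehta over $\bar k$, flat base change and Lemma~\ref{basechange}. Then $g^*g_*E\to E$ is a morphism in the abelian category $\cC^{NF}(Z)$ that is an isomorphism on a dense open, hence an isomorphism (Corollary~\ref{birmor}).

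\textbf{What survives.} With these repairs, your remaining choices are legitimate alternatives. Purity on $W$ for the étale case replaces the paper's Stein factorization. Level-by-level descent of $F$-divisible bundles replaces the appeal to \cite{SZ25}, provided you justify or cite that each $E_i$ of a stratified bundle on a smooth projective variety is numerically flat.
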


The proof proceeds in two steps. In Section~3 we establish the general criterion above for a birational morphism to induce an isomorphism of Tannaka group schemes. In Section~4 we verify its hypotheses for the Tannakian categories defining the S, Nori, extended Nori, F, extended F, local, extended local, \'etale, extended \'etale and unipotent fundamental group schemes. For numerically flat vector bundles, the key input is the  base change result and the argument of Hogadi--Mehta \cite{HM11}. The remaining cases are then deduced from the numerically flat case and from the defining properties of the corresponding Tannakian subcategories and their saturation behaviors.

\section{Preliminaries}

Let $k$ be a field, $K/k$ a field extension, $f:X\rightarrow Y$ a morphism of schemes over $k$, $y\in Y(k)$, $\Vect(X)$ the category of vector bundles on $X$, $E$ a quasi-coherent sheaf on $X$. Consider the Cartesian diagrams
\[
    \begin{aligned}
        \begin{tikzcd}
     X_y\arrow{r}{t} \arrow{d}{g} & X \arrow{d}{f}\\
     \Spec \kappa(y)\arrow{r}{y} & Y
\end{tikzcd}&\qquad &\begin{tikzcd}
            X\times_{\Spec k}\Spec K\arrow[r,"p"]\arrow[d]& X\arrow[d]\\
            \Spec K\arrow[r]&\Spec k .
        \end{tikzcd}
    \end{aligned}
\]
We denote $X_K := X \times_{\Spec k} \Spec K$ and $E \otimes_k K:= p^*E$. We say a sheaf $E$ on $X$ \emph{satisfies base change at $y$} if the canonical map $y^* f_*E\rightarrow g_* t^* E$
is an isomorphism, i.e. $ f_*E|_y\xrightarrow{\cong} H^0(X_y,E|_{X_y})$.

\begin{Definition}
    Let $k$ be a field, $X$ a connected scheme proper over $k$, $x\in X(k)$, $\cC_X$ a Tannakian category over $X$ whose objects are vector bundles on $X$ with fibre functor $|_x:E\mapsto E|_x$, and denote its Tannaka group scheme by $\pi(\cC_X,x)$.
\end{Definition}

\begin{Definition}
    Let $k$ be a field, $X$ a connected scheme proper over $k$, $x\in X(k)$, $\cC_X$ the Tannakian category over $X$. Define the \textit{saturation category} $\overline{\cC}_X$ of $\cC_X$ as the full subcategory of $\Vect(X)$ whose objects are those $E$ for which there exists a filtration
    $0\hookrightarrow E_1 \hookrightarrow \cdots\hookrightarrow E_n=E,$
    such that $E^i=E_{i+1}/E_i\in\cC_X$ for any $i$.
\end{Definition}

\begin{Remark}
 Let $k$ be a field, $f: X \rightarrow Y$ be a morphism between connected schemes proper over $k$. Let $\cC_X \subset \Vect(X)$ and $\cC_Y \subset \Vect(Y)$ be the Tannakian categories. The pullback  induces a functor $f^*: \cC_Y \rightarrow \cC_X$. Then
 \begin{enumerate}
     \item   The pullback induces a functor $f^*: \overline{\cC}_Y \rightarrow \overline{\cC}_X$.
     \item By \cite[Proposition 3.3]{LiTian26}, $\overline{\cC}_X$ is a Tannakian category over $X$.
 \end{enumerate}
\end{Remark}

\begin{Definition}[\cite{stacks}, Definition~29.50.1]
Let $X,Y$ be schemes with finitely many irreducible components respectively. We say a morphism $f\colon X \to Y$
is \emph{birational} if
\begin{enumerate}
    \item $f$ induces a bijection between the set of generic points of irreducible components of $X$ and the set of generic points of the irreducible components of $Y$.
    \item for every generic point $\eta \in X$ of an irreducible component of $X$, the local ring map
    $\mathcal{O}_{Y,f(\eta)} \to \mathcal{O}_{X,\eta}$
    is an isomorphism.
\end{enumerate}
\end{Definition}

\begin{Lemma}[\cite{stacks}, Lemma~37.53.1]\label{steinfact}
Let $k$ be a field, $f \colon X \to Y$ be a universally closed and quasi-separated morphism between schemes over $k$. There exists a factorization
\[
    \begin{tikzcd}
                                   & Y' \arrow[rd, "\pi"] &   \\
X \arrow[ru, "f'"] \arrow[rr, "f"] &                      & Y
\end{tikzcd}
\]  
with the following properties:
\begin{enumerate}
    \item $Y'=\underline{\mathrm{Spec}}_Y\bigl(f_*\cO_X\bigr)$, $\pi \colon Y' \to Y$ is integral, and $Y'$ is the normalization of $Y$ in $X$.
    \item  $f'$ is universally closed, quasi-compact, quasi-separated, and surjective, and $f'_*\cO_X=\cO_{Y'}$.
   
\end{enumerate}
\end{Lemma}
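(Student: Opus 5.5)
This statement is quoted from the Stacks Project as an input rather than proved in the paper. I would nonetheless reprove it by following the classical Stein factorization argument.

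\emph{Step 1: construct $Y'$ and the factorization.} Set $\mathcal{A}:=f_*\cO_X$ and $Y':=\underline{\mathrm{Spec}}_Y(\mathcal{A})$. Since $f$ is quasi-compact (universally closed implies quasi-compact) and quasi-separated, $\mathcal{A}$ is a quasi-coherent $\cO_Y$-algebra. The identity map $\mathcal{A}\to f_*\cO_X$ corresponds, by the universal property of relative Spec, to a $Y$-morphism $f'\colon X\to Y'$ with $\pi\circ f'=f$. By construction $\pi_*\cO_{Y'}=\mathcal{A}$, and one checks $f'_*\cO_X=\cO_{Y'}$ after pushing forward along the affine morphism $\pi$. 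Since $\pi$ is affine, $\pi_*$ detects isomorphisms of quasi-coherent sheaves.

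\emph{Step 2: the properties of $f'$.} Quasi-compactness and quasi-separatedness of $f'$ follow from those of $f$, because $\pi$ is separated (it is affine). Universal closedness also follows: $f$ is universally closed and $\pi$ is separated, so the standard cancellation lemma applies. Hence $f'(X)$ is closed. It is also dense, because $f'_*\cO_X=\cO_{Y'}$ forces the scheme-theoretic image to be all of $Y'$; a nonvanishing section on an open set missing the image would pull back to zero. Therefore $f'$ is surjective.

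\emph{Step 3: integrality and normalization.} The key claim, and the main obstacle, is that $\pi$ is integral, i.e.\ every local section $s\in\Gamma(V,f_*\cO_X)$ is integral over $\cO_Y(V)$ for affine $V$. The standard approach uses universal closedness. View $s$ as a morphism $f^{-1}V\to\mathbb{A}^1_V$ and take the closed image $Z$ of $f^{-1}V$ in $\mathbb{A}^1_V$. Then pass to $\mathbb{P}^1_V$ to show that $Z$ does not meet the section at infinity, using that $f$ is universally closed. It follows that $Z$ is finite over $V$, and hence $s$ satisfies a monic polynomial. This is exactly the Stacks lemma that universally closed, quasi-compact, quasi-separated morphisms have integral $f_*\cO_X$; I would invoke it directly rather than reprove it.

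Finally, $Y'$ is the normalization of $Y$ in $X$: by definition that normalization is the relative Spec of the integral closure of $\cO_Y$ in $f_*\cO_X$, and this closure is all of $f_*\cO_X$ by integrality.
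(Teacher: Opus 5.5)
Your proposal is correct. The paper gives no proof of its own (it just cites Stacks, Lemma~37.53.1), and your outline reproduces the Stacks argument: quasi-coherence of $f_*\cO_X$, the cancellation lemmas for $f'$ using that the affine morphism $\pi$ is separated, integrality of $f_*\cO_X$ via the Stacks lemma for quasi-compact quasi-separated universally closed morphisms, and surjectivity of $f'$ from a closed, dense image. One small ordering point: deducing $f'_*\cO_X=\cO_{Y'}$ by applying $\pi_*$ needs $f'_*\cO_X$ to be quasi-coherent, which uses that $f'$ is quasi-compact and quasi-separated, so that claim belongs after your Step~2.
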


\begin{Lemma}[\cite{stacks}, Lemma~29.54.8]\label{integraliso}
   Let $k$ be a field, $f:X \rightarrow Y$ be an affine integral birational morphism between integral schemes over $k$ with $Y$ normal. Then $f$ is an isomorphism.
\end{Lemma}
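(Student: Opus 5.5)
The statement to prove is Lemma~\ref{integraliso}: an affine, integral, birational morphism $f\colon X\to Y$ of integral $k$-schemes with $Y$ normal is an isomorphism. Being an isomorphism is local on $Y$, and $f$ is affine, so I will reduce at once to the affine case. Take an affine open $V=\Spec A\subset Y$. Then $f^{-1}(V)=\Spec B$, and $f$ corresponds to a ring map $\varphi\colon A\to B$. It suffices to prove that $\varphi$ is an isomorphism for every such $V$, since then $f$ is an isomorphism over an open cover of $Y$, hence globally.

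First, the algebraic input. Because $X$ and $Y$ are integral, $A$ and $B$ are domains. Their fraction fields are the local rings at the generic points, $\cO_{Y,\eta_Y}=\mathrm{Frac}(A)$ and $\cO_{X,\eta_X}=\mathrm{Frac}(B)$. A birational morphism sends $\eta_X$ to $\eta_Y$; here $f$ is dominant, so this holds. Birationality then says precisely that $\varphi$ induces an isomorphism $K:=\mathrm{Frac}(A)\xrightarrow{\cong}\mathrm{Frac}(B)$.

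Next, injectivity. $A\to K$ is injective and factors as $A\to B\to \mathrm{Frac}(B)\cong K$, so $\varphi$ is injective. We may therefore regard $A\subset B\subset K$.

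Then surjectivity. Integrality of $f$ means $B$ is integral over $A$. Every element of $B$ is thus an element of $K$ integral over $A$. Normality of $Y$ means each local ring $A_{\mathfrak p}$ is an integrally closed domain, and hence so is $A=\bigcap_{\mathfrak p}A_{\mathfrak p}$ inside $K$. Consequently $B\subset A$, so $B=A$ and $\varphi$ is an isomorphism.

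The only point needing care is the passage from ``normal scheme'' (local rings integrally closed) to ``$A$ integrally closed in its fraction field''. This is the standard fact that an intersection of integrally closed overrings within $K$ is integrally closed, together with $A=\bigcap_{\mathfrak m}A_{\mathfrak m}$ for a domain. I expect no real obstacle beyond this: the rest is bookkeeping, namely the identification of the generic points and the observation that birationality gives an isomorphism of function fields.
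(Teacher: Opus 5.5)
Your proof is correct, and it is the standard affine-local argument behind the cited Stacks lemma (the paper gives no proof of its own, only the citation): over an affine open $\Spec A\subset Y$ you get $A\subset B\subset \mathrm{Frac}(A)$ with $B$ integral over $A$, and since $A$ is integrally closed, $B=A$. One point should be stated explicitly: $B$ is a domain only because $f^{-1}(\Spec A)$ is nonempty, which holds since it contains $\eta_X$ (condition (1) of birationality gives $f(\eta_X)=\eta_Y$). Also note that the hypothesis ``affine'' is redundant, since integral morphisms are affine by definition.
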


\begin{Corollary}\label{structuresheaves}
  Let $k$ be a field, $f \colon X \to Y$ a  birational morphism of integral schemes proper over  $k$ with $Y$ normal. Then we have $f_*\cO_X = \cO_Y$. 
\end{Corollary}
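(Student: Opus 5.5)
The plan is to apply the Stein factorization of Lemma~\ref{steinfact} to $f$ and then use Lemma~\ref{integraliso} to show that the finite part of the factorization is an isomorphism.

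First, $f$ is proper because $X$ and $Y$ are proper over $k$. In particular $f$ is universally closed and quasi-separated, so Lemma~\ref{steinfact} gives a factorization $f=\pi\circ f'$ with $Y'=\underline{\mathrm{Spec}}_Y(f_*\cO_X)$, $\pi$ integral, and $f'_*\cO_X=\cO_{Y'}$. The morphism $\pi$ is affine by construction as a relative spectrum. Once we know $\pi$ is an isomorphism, the conclusion follows at once: $f_*\cO_X=\pi_*f'_*\cO_X=\pi_*\cO_{Y'}\cong\cO_Y$, and the composite is the canonical map $\cO_Y\to f_*\cO_X$.

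To apply Lemma~\ref{integraliso} to $\pi$, we must check that $Y'$ is integral and that $\pi$ is birational.
\begin{itemize}
\item \emph{$Y'$ is integral.} Since $X$ is integral, $f_*\cO_X$ is a quasi-coherent sheaf of domains on $Y$: for $V\subset Y$ open affine, $\cO_X(f^{-1}V)$ is a domain or zero. The sheaf is quasi-coherent because $f$ is quasi-compact and quasi-separated.
\item \emph{$Y'$ is irreducible.} This is the step needing the most care. Since $f'$ is surjective and $X$ is irreducible, $Y'$ is irreducible. Hence $Y'$ is integral, with generic point $\eta'=f'(\eta_X)$.
\item \emph{$\pi$ is birational.} Let $\eta_Y$ be the generic point of $Y$. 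The stalk of $f_*\cO_X$ at $\eta_Y$ is computed by flat base change to $\Spec\cO_{Y,\eta_Y}=\Spec K(Y)$. Because $f$ is birational, the generic fibre $X_{\eta_Y}$ is $\Spec K(X)=\Spec K(Y)$. Concretely, over a small open $V\subset Y$ over which $f$ is an isomorphism, $f_*\cO_X|_V=\cO_V$. So $\pi$ is an isomorphism over a dense open of $Y$, and it sends $\eta'$ to $\eta_Y$ with $\cO_{Y,\eta_Y}\to\cO_{Y',\eta'}$ an isomorphism of function fields.
\end{itemize}

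For the last bullet we use the standard fact that a birational morphism of integral schemes of finite type is an isomorphism over some dense open. Alternatively, we can argue only at stalks: $(f_*\cO_X)_{\eta_Y}=\cO_X(f^{-1}V)$ in the colimit over $V$, and this equals $K(X)$. I expect this generic-point identification to be the main point to justify carefully, though it is routine.

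With these checks done, Lemma~\ref{integraliso} applies to $\pi$, using that $Y$ is normal. So $\pi$ is an isomorphism, and $\cO_Y\to f_*\cO_X$ is an isomorphism.

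As a fallback avoiding the factorization: $f_*\cO_X$ is a coherent $\cO_Y$-algebra (by properness) contained in the constant sheaf $K(Y)$, since $X$ is integral and restriction to the generic point is injective. Each local section is therefore integral over $\cO_{Y,y}$ and lies in $K(Y)$. Normality of $Y$ forces $f_*\cO_X=\cO_Y$.
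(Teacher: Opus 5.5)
Your proposal is correct and follows the same route as the paper. You take the Stein factorization $f=\pi\circ f'$, apply Lemma~\ref{integraliso} to the affine integral birational morphism $\pi$, and conclude $f_*\cO_X\cong\pi_*f'_*\cO_X\cong\pi_*\cO_{Y'}\cong\cO_Y$. Along the way you also justify the hypotheses the paper leaves implicit, namely that $Y'$ is integral and that $\pi$ is birational. Your fallback argument is likewise valid: $f_*\cO_X$ is a finite $\cO_Y$-algebra contained in $K(Y)$, so normality of $Y$ forces equality.
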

\begin{proof}
 Since any morphism between proper schemes is proper, and proper morphism is universally closed and quasi-separated, by Lemma~ \ref{steinfact}, we have a commutative diagram
\[
    \begin{tikzcd}
                                   & Y' \arrow[rd, "\pi"] &   \\
X \arrow[ru, "f'"] \arrow[rr, "f"] &                      & Y
\end{tikzcd}
\]   
such that $Y'=\underline{\mathrm{Spec}}_Y\bigl(f_*\cO_X\bigr)$, $f'_*\cO_X = \cO_{Y'}$ and $\pi: Y' \to Y$ is finite. Since $f$ is birational, $\pi$ is also birational,  by Lemma~\ref{integraliso}, $\pi$ is an isomorphism. Hence $f_*\cO_X \cong \pi_*f'_*\cO_X \cong \pi_*\cO_{Y'} \cong \cO_Y$.
    
\end{proof}

\section{The birational invariance of Tannaka group schemes}

\textbf{Notations and Conventions.} Let $k$ be a field, $f: X \rightarrow Y$ a morphism between connected schemes proper over $k$, $x \in X(k)$ lying over $y \in Y(k)$. Let $\Vect(X)$ and $\Vect(Y)$ be the categories of vector bundles on $X$ and $Y$ respectively, $\cC_X \subset \Vect(X)$ and $\cC_Y \subset \Vect(Y)$ be the Tannakian categories. Denote by $\pi(\cC_X,x)$ and $\pi(\cC_Y,y)$ the corresponding Tannaka group schemes.
The pullback induces a functor $f^*: \cC_Y \rightarrow \cC_X$ and a canonical
homomorphism  $ \pi(\cC_X,x)\to \pi(\cC_Y,y)$ of fundamental group schemes.

\begin{Lemma}[\cite{LiTian26}~Lemma 3.1]\label{faith}
    Let $k$ be a field,  $f: X \rightarrow Y$ a morphism of connected schemes proper over $k$  with $f_*\cO_X=\cO_Y $. Then the induced functor $f^*: \Vect(Y)\rightarrow \Vect(X)$ is fully faithful. In particular, the restriction functor $f^*:\cC_Y \rightarrow \cC_X$  is fully faithful.
\end{Lemma}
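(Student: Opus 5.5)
The plan is to compare homomorphism sheaves via the projection formula and then take global sections. Fix vector bundles $E,F$ on $Y$. Pullback gives a map
\[
\mathrm{Hom}_Y(E,F)\to \mathrm{Hom}_X(f^*E,f^*F),
\]
and I will show it is bijective. Since $E$ and $F$ are locally free, the adjunction $f^*\dashv f_*$ identifies the target with $\mathrm{Hom}_Y(E,f_*f^*F)$. Locally on $Y$ the bundle $F$ is free, so the projection formula gives $f_*f^*F\cong F\otimes f_*\cO_X$. By hypothesis $f_*\cO_X=\cO_Y$, so this is just $F$.

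It remains to check that the composite
\[
\mathrm{Hom}_Y(E,F)\to \mathrm{Hom}_X(f^*E,f^*F)\cong \mathrm{Hom}_Y(E,f_*f^*F)
\]
is the map induced by the unit $F\to f_*f^*F$. The unit is identified, under the projection formula, with $F\otimes(\cO_Y\to f_*\cO_X)$, which is an isomorphism. This step is the main bookkeeping point: one must verify that the projection-formula isomorphism is compatible with the unit. I would check it locally where $F\cong\cO_Y^{r}$, where it reduces to the unit $\cO_Y\to f_*\cO_X$, and that map is an isomorphism by assumption.

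Hence $f^*$ is fully faithful on $\Vect(Y)$. The statement about $\cC_Y$ then follows at once, because $\cC_Y\subset\Vect(Y)$ and $\cC_X\subset\Vect(X)$ are full subcategories and $f^*$ carries $\cC_Y$ into $\cC_X$.

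Finally, properness and connectedness enter only through the hypothesis $f_*\cO_X=\cO_Y$ (compare Corollary~\ref{structuresheaves}). The argument above does not use them anywhere else.
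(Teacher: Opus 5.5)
Your argument is correct. The paper does not prove this lemma itself; it quotes it from \cite{LiTian26}, Lemma~3.1. What you give is the standard proof.

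Write $\eta_F\colon F\to f_*f^*F$ for the unit. Naturality gives $f_*(f^*\phi)\circ\eta_E=\eta_F\circ\phi$, so under the adjunction the map $\phi\mapsto f^*\phi$ becomes $\phi\mapsto\eta_F\circ\phi$. Your local check on $F\cong\cO_Y^{r}$ shows that $\eta_F$ is an isomorphism because $\cO_Y\to f_*\cO_X$ is one.

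The more common formulation is the chain $\mathrm{Hom}_X(f^*E,f^*F)=H^0(X,f^*\mathcal{H}om(E,F))=H^0(Y,f_*f^*\mathcal{H}om(E,F))\cong H^0(Y,\mathcal{H}om(E,F))$ via the projection formula. That version leaves implicit that the resulting bijection is the map $\phi\mapsto f^*\phi$. In your version this compatibility is automatic, and once $\eta_F$ is checked locally the projection formula is not needed at all.

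Three small remarks:
\begin{itemize}
\item The adjunction $f^*\dashv f_*$ needs no local freeness. Only $F$ has to be locally free, and only so that $\eta_F$ is an isomorphism.
\item You are right that properness and connectedness play no role. The statement holds for any morphism of ringed spaces with $\cO_Y\xrightarrow{\sim}f_*\cO_X$.
\item The ``in particular'' clause uses that $\cC_X$ and $\cC_Y$ are full subcategories of $\Vect(X)$ and $\Vect(Y)$ preserved by $f^*$. That is the paper's standing convention, so your appeal to it is fine.
\end{itemize}
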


\begin{Theorem}\label{generalmor}
Let $k$ be a field, $f \colon X \to Y$ a morphism of  connected schemes proper over a field $k$ with $f_*\cO_X=\cO_Y $, $x \in X(k)$ lying over $y \in Y(k)$. Then the following conditions are equivalent:
\begin{enumerate}
    \item $f^*$ induces an isomorphism
    $ \pi(\cC_X,x)\cong \pi(\cC_Y,y)$.
    \item $f^*$ and $f_*$ are quasi-inverse exact tensor functors between $\cC_X$ and $\cC_Y$.
    \item For any $E\in \cC_X$, we have
    $f_*E\in \cC_Y$ and the natural morphism $f^*f_*E \rightarrow E$ is an isomorphism.
    \item For any $E\in \cC_X$,  we have $f_*E\in \cC_Y$ and there exists  $s \in Y(k)$ $($or for any $s \in Y(k))$
    \begin{enumerate}
        \item $E|_{X_{s}}$ is a trivial vector bundle.
        \item $E$ satisfies base change at $s$.
    \end{enumerate}
\end{enumerate} 
\end{Theorem}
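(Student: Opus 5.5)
We prove the cycle $(1)\Rightarrow(2)\Rightarrow(3)\Rightarrow(1)$ and then show $(3)\Leftrightarrow(4)$. Since $f_*\cO_X=\cO_Y$, Lemma~\ref{faith} makes $f^*\colon\cC_Y\to\cC_X$ a fully faithful exact tensor functor compatible with the fibre functors, because $(f^*F)|_x=F|_y$. So the whole question is essential surjectivity, together with identifying the quasi-inverse with $f_*$.

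\emph{$(1)\Rightarrow(2)$.} If $\pi(\cC_X,x)\to\pi(\cC_Y,y)$ is an isomorphism, Tannaka duality makes $f^*$ an equivalence of tensor categories. For $E\in\cC_X$, choose $F\in\cC_Y$ with $f^*F\cong E$. The projection formula and $f_*\cO_X=\cO_Y$ give $f_*E\cong f_*f^*F\cong F\otimes f_*\cO_X=F$, so $f_*E\in\cC_Y$. Moreover the counit $f^*f_*E\to E$ is identified with this isomorphism, and the unit $F\to f_*f^*F$ is an isomorphism. Hence $f_*$ restricted to $\cC_X$ is a quasi-inverse of $f^*$, and it is therefore an exact tensor functor.

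\emph{$(2)\Rightarrow(3)$.} This is immediate.

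\emph{$(3)\Rightarrow(1)$.} Condition (3) says that every $E\in\cC_X$ lies in the essential image of $f^*$. Combined with full faithfulness, $f^*$ is an equivalence compatible with the fibre functors, and so the induced homomorphism of Tannaka groups is an isomorphism. Alternatively, use the standard criterion: the homomorphism is faithfully flat by full faithfulness plus closure under subquotients (subobjects of $f^*F$ in $\cC_X$ come from $\cC_Y$ via (3)), and it is a closed immersion because every object of $\cC_X$ is a subquotient of some $f^*F$.

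\emph{$(3)\Rightarrow(4)$.} Fix any $s\in Y(k)$. Put $F=f_*E\in\cC_Y$, a vector bundle of rank $r$. Then $E|_{X_s}\cong (f^*F)|_{X_s}=g^*(F|_s)\cong\cO_{X_s}^{\,r}$, so (a) holds. For (b), compute
\[
H^0(X_s,E|_{X_s})\cong H^0(X_s,\cO_{X_s})\otimes_k F|_s .
\]
So we must show $H^0(X_s,\cO_{X_s})=k$, i.e.\ that $\cO_Y$ satisfies base change at $s$. This is the main obstacle, since we only know $f_*\cO_X=\cO_Y$ and no flatness. The plan is to avoid it by treating (b) in the only form needed for the converse, namely that the map $F|_s=f_*E|_s\to H^0(X_s,E|_{X_s})$ composed with evaluation at $x$ is injective and compatible with the trivialization. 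Failing that, we use the fibre functor: $X_s$ is proper over $k$ with $f'_*\cO=\cO$ in the Stein factorization, so $X_s$ is connected (Zariski's connectedness) and $H^0(X_s,\cO_{X_s})$ is a local finite $k$-algebra. We then need reducedness, and would try to get it by checking base change directly for $E$ of the form $f^*F$ using $\omega_x$.

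\emph{$(4)\Rightarrow(3)$.} Set $F=f_*E\in\cC_Y$ and consider the counit $\varphi\colon f^*F\to E$, a morphism in $\cC_X$ between bundles of equal rank. By (b) and (a), $\varphi|_{X_s}$ is the evaluation map $H^0(X_s,E|_{X_s})\otimes\cO_{X_s}\to E|_{X_s}$, which is an isomorphism because $E|_{X_s}$ is trivial. So $\varphi$ is an isomorphism at points of $X_s$. Since $\cC_X$ is abelian with a fibre functor, $\ker\varphi$ and $\operatorname{coker}\varphi$ are vector bundles in $\cC_X$. As $X$ is connected, their ranks are constant, and they vanish on $X_s$, so both are zero. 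Hence $\varphi$ is an isomorphism. The ``for any $s$'' version follows from the ``exists $s$'' version, together with the direction $(3)\Rightarrow(4)$ holding at every $s$.
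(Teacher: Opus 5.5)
Your arguments for $(1)\Rightarrow(2)\Rightarrow(3)\Rightarrow(1)$ and for $(4)\Rightarrow(3)$ are sound and follow the paper, which proves $(4)\Rightarrow(1)$ by the same kernel/cokernel argument on $X_s$ plus Lemma~\ref{faith}. The genuine gap is $(3)\Rightarrow(4)$(b). You correctly reduce it to $H^0(X_s,\cO_{X_s})=k$. In fact, since $\cO_X\in\cC_X$, condition (b) for $E=\cO_X$ at $s$ \emph{is} exactly this statement. But you never prove it. Your first fallback weakens (b), i.e.\ it changes the theorem. Your second needs $X_s$ to be reduced, and nothing in the hypotheses gives that.

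Nor can $H^0(X_s,\cO_{X_s})=k$ be derived from $f_*\cO_X=\cO_Y$ alone. Take the classical flat family of twisted cubics over $Y=\mathbb{P}^1$ (closure in $\mathbb{P}^3\times\mathbb{P}^1$) whose fibre $X_0$ is a plane nodal cubic $C_0$ with an embedded point $p$ at the node.
\begin{itemize}
\item The total space is integral with geometrically integral generic fibre. Hence $f_*\cO_X$ is a coherent $\cO_Y$-algebra inside $k(Y)$, integral over $\cO_Y$, so it equals $\cO_Y$ by normality.
\item However, $0\to k_p\to\cO_{X_0}\to\cO_{C_0}\to 0$ gives $h^0(X_0,\cO_{X_0})=2$.
\item With $\cC_X,\cC_Y$ the categories of trivial bundles, (1)--(3) hold, while (b) fails at $s=0$ for $E=\cO_X$.
\end{itemize}
So the ``for any $s$'' form of (4) does not follow from (3). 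The ``there exists $s$'' form needs some $s\in Y(k)$ with $H^0(X_s,\cO_{X_s})=k$. One example is $s=y$ when $X_y$ is reduced: $X_y$ is connected, so $H^0(X_y,\cO_{X_y})$ is a finite field extension of $k$, and evaluation at $x\in X_y(k)$ forces it to be $k$.

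The paper's proof has the same hole, only hidden. It passes from $(f_*E)|_s\otimes_k\cO_{X_s}\cong E|_{X_s}$ to $(f_*E)|_s\cong H^0(X_s,E|_{X_s})$ by ``taking global sections'', which silently uses $H^0(X_s,\cO_{X_s})=k$. Your diagnosis is therefore more accurate than the written argument, but neither establishes the equivalence with (4) as stated; an extra hypothesis on $s$ is needed.

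A related point in your $(4)\Rightarrow(3)$: the claim that the evaluation map $H^0(X_s,E|_{X_s})\otimes_k\cO_{X_s}\to E|_{X_s}$ is an isomorphism ``because $E|_{X_s}$ is trivial'' also uses $H^0(X_s,\cO_{X_s})=k$. This does follow from (b) applied to $\cO_X$ at the same $s$, so you should take $s$ independent of $E$ and say so explicitly.

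Finally, the rest of the paper only uses $(1)\Leftrightarrow(2)\Leftrightarrow(3)$, which your argument does establish.
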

\begin{proof}

$(1) \Rightarrow (2)$ Since the induced homomorphism $ \pi(\cC_X,x)\to \pi(\cC_Y,y)$ is an isomorphism, $f^*: \cC_Y \to \cC_X$ is an equivalence of Tannakian categories. Hence for any $E \in \cC_X$, there exists  $F \in \cC_Y$ such that $E \cong f^*F$. Since $f_*\cO_X= \cO_Y$, by projection formula, we have
\[
  f_*E \cong f_*f^*F = f_* (f^*F \otimes_{\cO_X} \cO_X) \cong F \otimes_{\cO_Y} f_*\cO_X  \cong F \in \cC_Y.  
\]
Applying $f^*$ we have $ f^*f_*E \cong f^*F \cong E$. On the other hand, for any $F \in \cC_Y$, by $f_*\cO_X = \cO_Y$ and projection formula, we have   
\[
  f_*f^*F = f_* (f^*F \otimes_{\cO_X} \cO_X) \cong F \otimes_{\cO_Y} f_*\cO_X  \cong F.  
\]
Hence $f^*$ and $f_*$ are quasi-inverse functors.  Since $f_*$ is an equivalence, by \cite[Proposition~2.5.14]{Yeku19},  $f_*$ is an exact functor.
For any $E_1, E_2 \in \cC_X$, by projection formula, we have
\[
f_*(E_1 \otimes_{\cO_X} E_2) \cong f_*(f^*f_*E_1 \otimes_{\cO_X} E_2) \cong f_*E_1 \otimes_{\cO_Y} f_*E_2.
\] 
Hence $f_*$ is a tensor functor. Thus $f^*$ and $f_*$ are quasi-inverse exact tensor functors between $\cC_X$ and $\cC_Y$.

$(2) \Rightarrow (3)$ Obviously.
 
$(3) \Rightarrow (4)$ 
For any $E \in \cC_X$ we have $f_*E \in \cC_Y$ and $f^*f_*E \cong E$. For any $s \in Y(k)$, we have 
\[
    (f_*E)|_s\otimes_k \mathcal{O}_X\cong f^*f_*E|_{X_s}\cong  E|_{X_s}.
\] 
Hence $E|_{X_s}$ is a trivial bundle.
Taking global sections, we have $(f_*E)|_s\cong H^0(X_s,E|_{X_s})$, i.e. $E$ satisfies base change at $s\in Y(k)$.

$(4) \Rightarrow (1)$
Let $E \in \cC_X$ and $s \in Y(k)$ such that $f_*E \in \cC_Y$, $E|_{X_{s}}$ is a trivial  bundle and $E$ satisfies base change at $s$, i.e. $(f_*E)|_s\cong H^0(X_s,E|_{X_s})$. Restricting the morphism $f^*f_*E \rightarrow E$  to $X_s$, we have 
  \[
    (f_*E)|_s\otimes_k \mathcal{O}_X \cong f^*f_*E|_{X_{s}} \rightarrow E|_{X_s}\cong H^0(X_s,E|_{X_{s}})\otimes_{k}\cO_X.  
  \] 
Therefore, we obtain $ f^*f_*E|_{X_s}\cong E|_{X_s}$.
Consider the exact sequence
\[
\ker \hookrightarrow f^{*}f_{*}E \to E \to \operatorname{coker}.
\]
Restricting it to $X_s$, we obtain an exact sequence
\[
\ker|_{X_s} \hookrightarrow f^{*}f_{*}E|_{X_s} \to E|_{X_s} \to \operatorname{coker}|_{X_s}.
\]
Then $\ker|_{X_s}=\operatorname{coker}|_{X_s}=0$. Since $\cC_X$ is an abelian category, we have
$\ker,\operatorname{coker}\in \cC_X$.
It follows that $\ker=\operatorname{coker}=0$, i.e. $f^{*}f_{*}E \cong E$.
Hence $f^*$ is essentially surjective. By Lemma~\ref{faith}, $f^*$ is fully faithful. Hence $f^*$ is an equivalence of categories, which induces an isomorphism $ \pi(\cC_X,x)\cong \pi(\cC_Y,y)$.
\end{proof}

\begin{Corollary}\label{birmor}
   Let $k$ be a field, $f \colon X \to Y$ a  birational morphism of integral schemes proper over $k$ with $Y$ normal, $x \in X(k)$ lying over $y \in Y(k)$. Then the following conditions are equivalent:
\begin{enumerate}
    \item $f^*$ induces an isomorphism
    $ \pi(\cC_X,x)\cong \pi(\cC_Y,y)$.
    \item $f^*$ and $f_*$ are quasi-inverse exact tensor functors between $\cC_X$ and $\cC_Y$.
    \item For any $E\in \cC_X$, we have
    $f_*E\in \cC_Y$.
\end{enumerate}  
\end{Corollary}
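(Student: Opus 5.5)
The plan is to reduce everything to Theorem~\ref{generalmor}. By Corollary~\ref{structuresheaves}, the hypotheses (birational, integral, proper, $Y$ normal) give $f_*\cO_X=\cO_Y$. Hence Theorem~\ref{generalmor} applies, and conditions (1) and (2) here are literally (1) and (2) there, so they are equivalent. Also, (2) trivially implies (3). The only new content is therefore (3) $\Rightarrow$ (1). For this it suffices to verify condition (4) of Theorem~\ref{generalmor} at a single well-chosen point $s\in Y(k)$, or alternatively to verify condition (3) of that theorem directly.

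I would argue directly that $f^*f_*E\to E$ is an isomorphism, mimicking the argument for (4) $\Rightarrow$ (1). Let $E\in\cC_X$ with $f_*E\in\cC_Y$. Then $f^*f_*E\in\cC_X$, since $f^*$ maps $\cC_Y$ into $\cC_X$. The natural map $\varphi\colon f^*f_*E\to E$ is a morphism in the abelian category $\cC_X$, so $\ker\varphi$ and $\operatorname{coker}\varphi$ are objects of $\cC_X$, and in particular vector bundles. Since $f$ is birational, there is a dense open $V\subset Y$ over which $f$ is an isomorphism: the local ring isomorphism at the generic point spreads out, using that $f$ is of finite type and $X$, $Y$ are integral. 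Over $U=f^{-1}(V)$ we have $(f_*E)|_V\cong E|_U$ via $f$, so $\varphi|_U$ is an isomorphism. Thus $\ker\varphi$ and $\operatorname{coker}\varphi$ are vector bundles on the integral scheme $X$ that vanish on the nonempty open $U$. They therefore have rank $0$ and are zero. Hence $f^*f_*E\cong E$, which is condition (3) of Theorem~\ref{generalmor}, and (1) follows.

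The key step needing care is the claim that kernels and cokernels computed in $\cC_X$ agree with those in $\mathrm{Qcoh}(X)$, so that restriction to $U$ behaves well. This should follow from the Tannakian (abelian, exact) structure of $\cC_X\subset\Vect(X)$, exactly as it is used in the proof of (4) $\Rightarrow$ (1) of Theorem~\ref{generalmor}, so I would invoke the same convention here. If that is problematic, the fallback is to choose $s\in V(k)$, if $V$ has a rational point, and check (4)(a),(b) of Theorem~\ref{generalmor}. Over $V$ the fibre $X_s$ is a single point, so $E|_{X_s}$ is trivial and base change holds tautologically. However, the existence of a rational point in $V$ is not guaranteed over a general $k$, which is why I prefer the generic-isomorphism argument above.
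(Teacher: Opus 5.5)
Your proposal is correct and follows essentially the paper's own proof. You get $f_*\cO_X=\cO_Y$ from Corollary~\ref{structuresheaves}, show that $f^*f_*E\to E$ is an isomorphism because its kernel and cokernel lie in the abelian category $\cC_X$ and vanish on the open set where $f$ is an isomorphism, and conclude by Theorem~\ref{generalmor}; the paper relies on the same implicit convention that kernels and cokernels in $\cC_X$ are the sheaf-theoretic ones, so your rational-point fallback is not needed.
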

\begin{proof}
By Lemma~\ref{structuresheaves} we have $f_*\cO_X =\cO_Y$. 

Let $E \in \cC_X$ such that $f_*E \in \cC_Y$. Consider the exact sequence
\[
\ker \hookrightarrow f^{*}f_{*}E \to E \to \operatorname{coker}.
\]
Since $f$ is birational, there exists an open subset $U \subset X$ such that $f^{*}f_{*}E|_U\cong E|_U $. Then  $\ker|_U =\operatorname{coker}|_U = 0$. Since $\cC_X$ is an abelian category,  we have
$\ker,\operatorname{coker}\in \cC_X$.
It follows that $\ker=\operatorname{coker}=0$, hence the natural homomorphism $f^{*}f_{*}E \rightarrow E$ is an isomorphism. 

Now the Corollary follows from Theorem~\ref{generalmor}.
\end{proof}

\begin{Proposition}\label{saturationbir}
Let $k$ be a field, $f \colon X \to Y$ a morphism of smooth schemes proper over a field $k$ with
$f_*\cO_X \cong \cO_Y$, $x \in X(k)$  lying over $y \in Y(k)$. If $f^*$ induces an isomorphism  $\pi(\cC_X, x) \cong \pi(\cC_Y, y)$, then $f^*$ also induces an isomorphism $\pi(\overline{\cC}_X, x) \cong \pi(\overline{\cC}_Y, y)$.
\end{Proposition}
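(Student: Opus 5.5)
We apply Theorem~\ref{generalmor} to the saturated categories $\overline{\cC}_X$ and $\overline{\cC}_Y$. These are Tannakian by the Remark in Section~2, and $f^*$ sends $\overline{\cC}_Y$ into $\overline{\cC}_X$. We verify condition (3): for every $E\in\overline{\cC}_X$, the sheaf $f_*E$ lies in $\overline{\cC}_Y$ and the natural map $f^*f_*E\to E$ is an isomorphism. Theorem~\ref{generalmor} then gives $\pi(\overline{\cC}_X,x)\cong\pi(\overline{\cC}_Y,y)$; full faithfulness of $f^*$ is already supplied by Lemma~\ref{faith}.

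The proof is by induction on the length $n$ of a filtration $0\subset E_1\subset\cdots\subset E_n=E$ whose successive quotients lie in $\cC_X$. For $n=1$ we have $E\in\cC_X$, and the hypothesis together with Theorem~\ref{generalmor} $(1)\Rightarrow(3)$ gives $f_*E\in\cC_Y\subset\overline{\cC}_Y$ and $f^*f_*E\cong E$. For the inductive step, take the exact sequence $0\to E'\to E\to E''\to 0$ with $E'=E_{n-1}\in\overline{\cC}_X$ (shorter filtration) and $E''\in\cC_X$. Pushing forward gives
\[
0\to f_*E'\to f_*E\to f_*E''\xrightarrow{\ \delta\ } R^1f_*E'.
\]

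The main obstacle is showing that $f_*E\to f_*E''$ is surjective, i.e.\ that $\delta=0$. My plan is to show that $R^1f_*E'$ is controlled by $R^1f_*\cO_X$. By induction, $E'\cong f^*F'$ with $F'=f_*E'$ a vector bundle, so the projection formula gives $R^1f_*E'\cong F'\otimes R^1f_*\cO_X$. Likewise $E''\cong f^*F''$. The extension class of $E$ then lives in $\operatorname{Ext}^1_X(f^*F'',f^*F')=H^1(X,f^*(F''^\vee\otimes F'))$. The Leray spectral sequence gives an exact sequence
\[
0\to H^1(Y,F''^\vee\otimes F')\to H^1(X,f^*(F''^\vee\otimes F'))\to H^0(Y,F''^\vee\otimes F'\otimes R^1f_*\cO_X),
\]
and the image of the class in the last term is exactly $\delta$. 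So it suffices to show that this image vanishes.

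Here the smoothness of $X$ and $Y$ must be used. In the intended application $f$ is birational and both schemes are smooth, so $R^1f_*\cO_X=0$ by rational singularities (or Chatzistamatiou--R\"ulling in positive characteristic). In general I would try to argue via the fibre $X_y$: restrict to $X_y$, where $E'$ and $E''$ are trivial, and use the fact that on $X_y$ the extension must come from $\overline{\cC}$, a category stable under extensions. If the Leray edge vanishes, the extension class comes from $H^1(Y,\cdots)$, so $E\cong f^*F$ for an extension $F$ of $F''$ by $F'$. Then $F\in\overline{\cC}_Y$, and the projection formula gives $f_*E\cong F$ and $f^*f_*E\cong E$, completing the induction. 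If vanishing of $R^1f_*\cO_X$ is unavailable, I expect the step $\delta=0$ to require an extra hypothesis, and I would flag this as the crux.
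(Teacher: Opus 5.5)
Your argument is correct under the birationality hypothesis that the paper's proof also tacitly uses: once for $R^jf_*\cO_X=0$ via \cite[Corollary~3.2.10]{CR11}, and again in the final appeal to Corollary~\ref{birmor}. With that hypothesis it is essentially the paper's proof (induct along the filtration, kill $R^1f_*E_{n-1}$ via the projection formula and $R^1f_*\cO_X=0$, obtain a short exact sequence of pushforwards), except that you get $f^*f_*E\cong E$ inside the induction instead of from Corollary~\ref{birmor}; there the five lemma suffices and the Leray detour is unnecessary, so you only need $f_*\cO_X=\cO_Y$ and $R^1f_*\cO_X=0$.

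Your flagged caveat is right and should replace the vague fibre argument, because the statement as printed is false. Take $f\colon X\to\Spec k$ with $X$ a smooth proper geometrically connected curve of genus $\geq 1$ having a rational point, and let $\cC_X$ be the trivial bundles. Then $\pi(\cC_X,x)=\pi(\cC_Y,y)=1$, but a non-split extension of $\cO_X$ by $\cO_X$ lies in $\overline{\cC}_X$ and not in the essential image of $\overline{\cC}_Y$, so $\pi(\overline{\cC}_X,x)=\pi^{uni}(X,x)\neq 1=\pi(\overline{\cC}_Y,y)$.
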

\begin{proof}

 Since $f^*$ induces an isomorphism  $\pi(\cC_X, x) \cong \pi(\cC_Y, y)$,  by Theorem~\ref{generalmor}, for any $F \in \cC_X$ we have $f^*f_* F \cong F$. For any $F \in \cC_X$ and any $j \geq 0$, by projection formula, we have 
\[
R^jf_*F \cong R^jf_*(f^*f_*F)= R^jf_*(f^*f_*F \otimes_{\cO_X}\cO_X) \cong f_*F \otimes_{\cO_Y} R^jf_*\cO_X.
\]
By \cite[Corollary~3.2.10]{CR11}, we have $R^jf_*\cO_X=0$ for any $j>0$, then $R^jf_*F=0$ for any $j >0$ and $F \in \cC_X$.

For any $E \in \overline{\cC}_X$ there exists a filtration
\[
 0\hookrightarrow E_1 \hookrightarrow \cdots\hookrightarrow E_n=E,
\]
such that $E^i=E_{i+1}/E_i\in\cC_X$ for any $i$. 
Consider the short exact sequence
\[
    0 \rightarrow E_1 \rightarrow E_2 \rightarrow E_2/E_1\rightarrow 0,
\]
applying $f_*$, we have:
\[
    0 \rightarrow f_*E_1 \rightarrow f_*E_2 \rightarrow f_*(E_2/E_1)\rightarrow R^1f_*E_1 \rightarrow R^1f_*E_2 \rightarrow R^1f_*(E_2/E_1) \rightarrow \cdots.
\]
 Hence $f_*E_2 \in \overline{\cC}_Y$ and $R^jf_*E_2 =0$ for any $j>0$. 
Consider the short exact sequence
\[
    0 \rightarrow E_2 \rightarrow E_3 \rightarrow E_3/E_2\rightarrow 0,
\]
applying $f_*$, we have:
\[
    0 \rightarrow f_*E_2 \rightarrow f_*E_3 \rightarrow f_*(E_3/E_2)\rightarrow R^1f_*E_2 \rightarrow R^1f_*E_3 \rightarrow R^1f_*(E_3/E_2) \rightarrow \cdots.
\]
 Hence $f_*E_3 \in \overline{\cC}_Y$ and $R^jf_*E_3 =0$ for any $j>0$. Inductively, we have $f_*E_i \in \overline{\cC}_Y$ and $R^jf_*E_i=0$ for any $i$ and $j>0$. In particular, we have $f_*E \in \overline{\cC}_Y$.
By Corollary~\ref{birmor}, $f^*$ induces an isomorphism $\pi(\overline{\cC}_X, x) \cong \pi(\overline{\cC}_Y, y)$.
\end{proof}

\section{The birational invariance of fundamental group schemes}

\begin{Definition}
Let $k$ be a field, $X$ a geometrically reduced connected scheme proper over $k$, $x\in X(k)$, $E$ a vector bundle on $X$ of rank $r$. If $k$ is of positive characteristic, then $F_X:X\rightarrow X$ is the absolute Frobenius. Then $E$ is said to be
\begin{itemize}
\item \textit{numerically flat}, if both $E$ and $E^\vee$ are nef.
\item \textit{Nori semistable}, if for any smooth projective curve $f:C\rightarrow X$, $f^*E$ is semistable of degree 0 over $C$.
\item \textit{finite}, if there exist $f(t)\neq g(t)\in\mathbb{N}[t]$ such that $f(E)\cong g(E)$, where
    $$h(E):=\bigoplus_{i=0}^m\bigoplus_{j=1}^{n_i}E^{\otimes i}\text{ for any }h(t)=\sum\limits_{i=0}^mn_it^i\in\mathbb{N}[t].$$
\item \textit{essentially finite}, if there exists $E_1\hookrightarrow E_2\hookrightarrow F\in\Vect(X)$ such that $E\cong E_2/E_1$, where $E_1,E_2$ are numerically flat and $F$ is finite.
\item \textit{Frobenius finite}, if there exist $f(t)\neq g(t)\in\mathbb{N}[t]$ such that $\tilde{f}(E)\cong \tilde{g}(E)$, where 
    $$\tilde{h}(E):=\bigoplus\limits_{i=1}^{m}((F_X^i)^*E)^{\oplus n_i} \text{ for any }h(t)=\sum\limits_{i=0}^mn_it^i\in\mathbb{N}[t].$$
\item \textit{essentially Frobenius finite}, if there exists $E_1\hookrightarrow E_2\hookrightarrow F\in\Vect(X)$ such that $E\cong E_2/E_1$, where $E_1,E_2$ are numerically flat and $F$ is Frobenius finite.
\item \textit{Frobenius trivial}, if there exists positive integer $n$ such that $F_X^{n*} E\cong \cO_X^{\oplus r}$.
\item \textit{\'etale trivializable}, if there exists a finite \'etale covering $\phi: P\rightarrow X$ such that $\phi^* E\cong \cO_P^{\oplus r}$.
\item \textit{unipotent}, if there is a filtration
$0 \hookrightarrow E_1 \hookrightarrow \cdots \hookrightarrow E_n = E$
such that
$E_{i+1}/E_i \cong \mathcal{O}_X$
for any $i$.
\item \textit{$F$-divisible}, if it is a sequence
$\mathcal{E}=(E_i,\sigma_i)_{i\ge 0}$,
where each $E_i$ is a vector bundle on $X$ and
$\sigma_i:F_X^*E_{i+1}\xrightarrow{\sim} E_i$
is an isomorphism for every $i\ge 0$.
\end{itemize}
We have the following Tannakian categories:
    \begin{itemize}
        \item $\cC^{NF}(X)$: objects consist of numerically flat bundles on $X$.
        \item $\cC^{N}(X)$: objects consist of essentially finite bundles on $X$.
        \item $\cC^{F}(X)$: objects consist of essentially Frobenius finite bundles on $X$.
        \item $\cC^{Loc}(X)$: objects consist of Frobenius trivial bundles on $X$.
        \item $\cC^{\acute{e}t}(X)$: objects consist of \'etale trivializable bundles on $X$.
        \item $\cC^{uni}(X)$: objects consist of unipotent bundles on $X$.
         \item $\cC^{str}(X)$: objects consist of $F$-divisible bundles on $X$.
    \end{itemize}
We have the following Tannaka group schemes:
\begin{itemize}
        \item $\pi^{S}(X,x):=\pi(\cC^{NF}(X),x)$, called the \textit{S-fundamental group scheme}.
        \item $\pi^{N}(X,x):=\pi(\cC^{N}(X),x)$, called the \textit{Nori fundamental group scheme}.
        \item $\pi^{EN}(X,x):=\pi(\overline{\cC^{N}(X)},x)$, called the \textit{extended Nori fundamental group scheme}.
        \item $\pi^{F}(X,x):=\pi(\cC^{F}(X),x)$, called the \textit{F-fundamental group scheme}.
        \item $\pi^{EF}(X,x):=\pi(\overline{\cC^{F}(X)},x)$, called the \textit{extended F-fundamental group scheme}.
        \item $\pi^{Loc}(X,x):=\pi(\cC^{Loc}(X),x)$, called the \textit{local fundamental group scheme}.
        \item $\pi^{ELoc}(X,x):=\pi(\overline{\cC^{Loc}(X)},x)$, called the \textit{extended local fundamental group scheme}.
        \item $\pi^{\acute{e}t}(X,x):=\pi(\cC^{\acute{e}t}(X),x)$, called the \textit{\'etale fundamental group scheme}.
        \item $\pi^{E\acute{e}t}(X,x):=\pi(\overline{\cC^{\acute{e}t}(X)},x)$, called the \textit{extended \'etale fundamental group scheme}.
        \item $\pi^{uni}(X,x):=\pi(\cC^{uni}(X),x)$, called the \textit{unipotent fundamental group scheme}.
        \item $\pi^{str}(X,x):=\pi(\cC^{str}(X),x)$, called the \textit{stratified fundamental group scheme}.
    \end{itemize}
\end{Definition}

\begin{Remark}
Let $k$ be a field, $X$ a geometrically reduced connected scheme proper over $k$. Then
\begin{enumerate}
\item  For $E \in \Vect(X)$, $E$ is Nori semistable if and only if $E$ is numerically flat.

\item $\cC^{NF}(X)$ and $\cC^{uni}(X)$ are saturated categories, i.e.
$\overline{\cC^{NF}(X)}=\cC^{NF}(X)$
and $\overline{\cC^{uni}(X)}=\cC^{uni}(X)$.

 \item $\cC^{*}(X)$ and $\overline{\cC^{*}(X)}$ are Tannakian subcategories of $\cC^{NF}(X)$ for
    $*\in\{N, F, Loc,\acute{e}t, uni \}$.
\end{enumerate}
\end{Remark}

\begin{Lemma}[\cite{CLM22}, Proposition~1.4.4]\label{nef}
    Let $k$ be a field, $X$ a proper algebraic space over $k$, $E \in \Vect(X) $. Then $E$ is nef if and only if  the pullback $E\otimes_k k'$ on $X_{k'}$ is nef for every field extension $k \subset k'$.
\end{Lemma}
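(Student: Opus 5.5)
The direction ``$\Leftarrow$'' is trivial, since we may take $k'=k$. For ``$\Rightarrow$'' I first reduce to line bundles. Nefness of $E$ means that $\cO_{\mathbb{P}(E)}(1)$ is nef on $\mathbb{P}(E)$, i.e.\ it has nonnegative degree on every integral closed curve. Since $\mathbb{P}(E\otimes_k k')=\mathbb{P}(E)_{k'}$ compatibly with $\cO(1)$, it suffices to prove the following. If $Z$ is a proper algebraic space over $k$ and $L$ is a nef line bundle on $Z$, then $L_{k'}$ is nef on $Z_{k'}$.

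Let $C'\subset Z_{k'}$ be an integral closed curve and $p\colon Z_{k'}\to Z$ the projection. The image $p(C')$ cannot be a closed point $z$. Indeed, the fibre over $z$ is $\Spec(\kappa(z)\otimes_k k')$, which is finite over $k'$ and contains no curve. By a transcendence degree count, the closure $C$ of $p(C')$ is therefore an integral curve, and $C'$ is an irreducible component of $C_{k'}$. For a $1$-cycle decomposition $[C_{k'}]=\sum_i m_i[C'_i]$ with $m_i>0$ we have
\[
\deg_k(L|_C)=\deg_{k'}(L_{k'}|_{C_{k'}})=\sum_i m_i \deg_{k'}(L_{k'}|_{C'_i}),
\]
by flat base change of Euler characteristics. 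This already gives descent: a sum of nonnegative terms is nonnegative. For ascent, however, I must show that each individual $\deg(L_{k'}|_{C'_i})$ is nonnegative.

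\textbf{Finite extensions.} First treat $k'/k$ finite.
\begin{itemize}
\item If $k'/k$ is purely inseparable, then $p$ is a universal homeomorphism, so $C_{k'}$ has a single component and the identity above gives $m_1\deg(L_{k'}|_{C'_1})=\deg(L|_C)\ge0$.
\item If $k'/k$ is Galois, the group $\mathrm{Gal}(k'/k)$ acts transitively on the components $C'_i$ lying over $C$ and preserves $L_{k'}$ and the multiplicities. Hence all $\deg(L_{k'}|_{C'_i})$ are equal, and their positive combination is $\deg(L|_C)\ge 0$.
\item A general finite extension is handled by combining these two cases through a normal closure, using descent along the top of the tower.
\end{itemize}

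\textbf{Arbitrary extensions.} By descent it suffices to treat $k'$ algebraically closed. The curve $C'$ is defined over a finitely generated subextension $k_0\subset k'$. I spread it out: there is an integral $k$-variety $S$ with function field $k_0$ and a closed subspace $\mathcal{C}\subset Z\times_k S$, flat and proper over $S$ with curve fibres, whose generic fibre base-changes to $C'$ (after possibly enlarging $k_0$ by a finite extension). Since $\mathcal{C}\to S$ is flat and proper, $\chi(L^{\otimes n}|_{\mathcal{C}_s})$ is locally constant in $s$, so the degree of $L$ on fibres is constant after shrinking $S$. Now pick a closed point $s\in S$. Its residue field $\kappa(s)$ is finite over $k$, and $\deg(L|_{\mathcal{C}_s})$ is a sum, with positive multiplicities, of degrees of $L$ on integral curves in $Z_{\kappa(s)}$. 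By the finite case, each of these degrees is nonnegative. Therefore $\deg(L_{k'}|_{C'})=\deg(L|_{\mathcal{C}_s})\ge0$.

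The main obstacle is this last step: making the spreading out and the constancy of degree rigorous for algebraic spaces. Concretely, one must ensure flatness of the family and that the generic fibre is geometrically the given $C'$ up to a finite extension. I would handle this by generic flatness together with the standard limit arguments from the Stacks Project.
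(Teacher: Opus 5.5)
The paper gives no proof of this lemma. It is quoted from \cite[Proposition~1.4.4]{CLM22} and used only through Lemma~\ref{basechange}, which Corollary~\ref{numericalinv} applies with $K=\bar k$ in both directions. So there is no internal argument to compare with. What you supply is a self-contained proof, and it is essentially correct. The following steps all go through:
the reduction to $L=\cO_{\mathbb{P}(E)}(1)$; descent via $\deg_k(L|_C)=\sum_i m_i\deg_{k'}(L_{k'}|_{C'_i})$; the purely inseparable and Galois cases (transitivity on components, and invariance of degrees and multiplicities under the semilinear Galois action); the normal-closure tower; and specialization of a spread-out family to a closed point with finite residue field.

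Compared with the citation, your route buys transparency. Moreover, its finite-extension part alone already covers what the paper actually uses. A curve $C'\subset Z_{\bar k}$ is the base change of a closed subspace $C'_1\subset Z_{k_1}$ for some finite $k_1/k$. Then $\deg(L_{\bar k}|_{C'})=\deg(L_{k_1}|_{C'_1})$ is a positive combination of degrees on integral curves of $Z_{k_1}$.

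Two corrections.

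First, your claim that a transcendence degree count makes $\overline{p(C')}$ a curve having $C'$ as a component is false for transcendental $k'/k$. Take $C'=V(X_1-tX_0)\subset\mathbb{P}^2_{k(t)}$. On the chart $X_0\neq0$ with $u=X_1/X_0$ and $v=X_2/X_0$, the composite $k[u,v]\to k(t)[u,v]/(u-t)\cong k(t)[v]$ sends $u\mapsto t$, $v\mapsto v$, and is injective. So the generic point of $C'$ lies over the generic point of $\mathbb{P}^2_k$. The count is valid only for algebraic $k'/k$, where $p$ is integral. Your proof survives, because you use the claim only for finite $k'$, and descent starts from a curve $C\subset Z$ downstairs. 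Still, that paragraph should be restricted accordingly.

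Second, the obstacle you flag at the end is not a real one. Descend the closed subspace $C'$ itself along $Z_{k'}=\varprojlim Z_A$, over finitely generated $k$-subalgebras $A\subset k'$. This gives $\mathcal{C}_\eta\times_K k'=C'$ exactly, so no finite enlargement or geometric integrality is needed. The reduction to $k'$ algebraically closed is also superfluous. What remains is:
generic flatness;
local constancy of $\chi$ of $L$ and of $\cO$ on the fibres of a flat proper family (perfectness of the derived pushforward);
shrinking $S$ so that all fibres have dimension at most $1$ (upper semicontinuity of fibre dimension plus properness);
and additivity of degree over the $1$-cycle of $\mathcal{C}_s\subset Z_{\kappa(s)}$.
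All of these are available for algebraic spaces in the Stacks Project.
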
 
By Lemma~\ref{nef} we have the following result.
\begin{Lemma}\label{basechange}
   Let $k$ be a field, $K/k$ a field extension and $X$ a connected scheme proper over $k$. Then $E \in \cC^{NF}(X)$ if and only if $E \otimes_k K \in \cC^{NF}(X_K) $. 
\end{Lemma}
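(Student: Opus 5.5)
The plan is to reduce Lemma~\ref{basechange} to Lemma~\ref{nef}, applied once to $E$ and once to its dual, together with two compatibilities of base change. By definition, $E\in\cC^{NF}(X)$ means that $E$ and $E^\vee$ are both nef. Likewise, $E\otimes_k K\in\cC^{NF}(X_K)$ means that $E\otimes_k K$ and $(E\otimes_k K)^\vee$ are nef. So the statement follows once we know that nefness of each of $E$ and $E^\vee$ is equivalent to nefness of its pullback along $p\colon X_K\to X$.

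The first compatibility is that dualizing commutes with flat base change. Concretely, $p^*(E^\vee)\cong (p^*E)^\vee$, since $E$ is locally free of finite rank and the statement can be checked locally on a trivializing cover. The second is that $X_K$ is proper over $K$, because properness is stable under base change. Hence Lemma~\ref{nef} applies to $X$, and also to $X_K$ over $K$. We only need the direction of that lemma for $X$ over $k$.

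For the forward direction, assume $E$ and $E^\vee$ are nef. Lemma~\ref{nef}, applied to $E$ and to $E^\vee$ with $k'=K$, gives that $E\otimes_k K$ and $E^\vee\otimes_k K\cong (E\otimes_k K)^\vee$ are nef. Thus $E\otimes_k K\in\cC^{NF}(X_K)$.

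For the converse, assume $E\otimes_k K$ and $(E\otimes_k K)^\vee$ are nef. Lemma~\ref{nef} requires nefness after every extension $k'/k$, not only after $K$, so we must transfer the hypothesis to an arbitrary $k'$. Given such $k'$, choose a field $L$ containing both $K$ and $k'$ over $k$, for instance a residue field of $K\otimes_k k'$ at any point; this ring is nonzero, so such a point exists. Apply Lemma~\ref{nef} to $X_K$ over $K$ and the extension $L/K$: it gives that $E\otimes_k L$ is nef. Now apply Lemma~\ref{nef} to $X_{k'}$ over $k'$ and the extension $L/k'$, in the reverse direction: nefness of $E\otimes_k L=(E\otimes_k k')\otimes_{k'}L$ gives nefness of $E\otimes_k k'$. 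Since $k'$ was arbitrary, Lemma~\ref{nef} shows that $E$ is nef. The same argument applied to $E^\vee$, using $p^*(E^\vee)\cong(p^*E)^\vee$, shows that $E^\vee$ is nef, so $E\in\cC^{NF}(X)$.

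The main obstacle is exactly this descent step: the lemma's criterion quantifies over all extensions, while the hypothesis concerns a single $K$. The compositum trick above handles it, using Lemma~\ref{nef} in both directions over different base fields. Alternatively, one could note that nefness descends directly along the faithfully flat, surjective map $p$, via the criterion on curves or quotient line bundles. The connectedness hypothesis plays no role beyond making $\cC^{NF}$ meaningful.
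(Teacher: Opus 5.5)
Your reduction is the same as the paper's, whose entire proof is ``by Lemma~\ref{nef}'': apply base-change invariance of nefness to $E$ and to $E^\vee$, using $p^*(E^\vee)\cong(p^*E)^\vee$. Your forward direction is fine. The problem is the compositum step in the converse, which is circular. You read Lemma~\ref{nef} literally: nefness is preserved under extension, and descends only from nefness after \emph{all} extensions. Under that reading, the step ``apply Lemma~\ref{nef} to $X_{k'}$ over $k'$ and the single extension $L/k'$ in the reverse direction'' is not available. It is exactly descent of nefness along one field extension, which is the thing you said was missing. Take $k'=k$ to see that nothing is gained: then $K\otimes_k k'=K$, so $L=K$, and this step is literally the implication you are trying to prove. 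The trick relocates the difficulty rather than resolving it.

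There are two ways to close the gap. The first is to use Lemma~\ref{nef} as the paper does: for a \emph{given} extension $k'/k$, $E$ is nef iff $E\otimes_k k'$ is nef. The paper genuinely needs this form later, e.g.\ in Corollary~\ref{numericalinv}, where nefness is descended from $\bar k$ to $k$. With this reading the converse is immediate with $k'=K$, and the compositum is superfluous. The second is to prove the descent directly, which your closing remark gestures at but does not carry out. $E$ is nef iff $\cO_{\mathbb{P}(E)}(1)$ has nonnegative degree on every integral curve $C\subset\mathbb{P}(E)$. We have $\mathbb{P}(E)_K=\mathbb{P}(E_K)$, and degrees are differences of Euler characteristics, which are unchanged by flat base change to $K$. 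Hence $\deg\bigl(\cO(1)|_C\bigr)=\deg\bigl(\cO(1)|_{C_K}\bigr)$. The right-hand side is a sum, with positive multiplicities, of degrees on the irreducible components of $C_K$, each nonnegative because $E_K$ is nef. This is the actual content of the descent, and it must be supplied if you do not take Lemma~\ref{nef} in its single-extension form.
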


\begin{Proposition}\label{birinv:underS}
Let $k$ be a field, $f \colon X \to Y$ a morphism of  connected schemes proper over a field $k$ with $f_*\cO_X=\cO_Y $, $x \in X(k)$ lying over $y \in Y(k)$,  $f^*$ induces the natural isomorphism $\pi^{S}(X,x)\cong \pi^{S}(Y,y)$. Then $f^*$ induces an isomorphism $\pi^{*}(X,x)\cong \pi^{*}(Y,y)$ for  any $* \in \{N, EN, F, EF, Loc, ELoc, \acute{e}t, E\acute{e}t, uni\}$.
\end{Proposition}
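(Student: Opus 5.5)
Since $f^*$ induces $\pi^{S}(X,x)\cong\pi^{S}(Y,y)$, Theorem~\ref{generalmor} (applied with $\cC=\cC^{NF}$) tells us that $f^*$ and $f_*$ are quasi-inverse exact tensor functors between $\cC^{NF}(X)$ and $\cC^{NF}(Y)$. The plan is to transport this equivalence to each subcategory $\cC^*(X)\subset\cC^{NF}(X)$ for $*\in\{N,F,Loc,\acute{e}t,uni\}$. By criterion (3) of Theorem~\ref{generalmor}, it suffices to show that $E\in\cC^*(X)$ implies $f_*E\in\cC^*(Y)$; the isomorphism $f^*f_*E\cong E$ is automatic from the $S$-case. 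Equivalently, it suffices to show that if $F\in\cC^{NF}(Y)$ and $f^*F\in\cC^*(X)$, then $F\in\cC^*(Y)$, applied to $F=f_*E$. The extended versions $EN,EF,ELoc,E\acute{e}t$ then follow from Proposition~\ref{saturationbir}, provided its smoothness hypothesis holds. Otherwise, I would rerun its argument using only the vanishing $R^jf_*$ on $\cC^*(X)$ obtained from the projection formula. If that vanishing is not available, I would instead argue directly: a filtration of $E\in\overline{\cC^*(X)}$ with graded pieces in $\cC^*(X)$ lies in $\cC^{NF}(X)$, and the exact functor $f_*$ carries it to a filtration of $f_*E$ with graded pieces $f_*(E^i)\in\cC^*(Y)$.

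The individual cases go as follows.

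\emph{Unipotent.} Let $E$ have a filtration with quotients $\cO_X$. Since $f_*$ is exact on $\cC^{NF}$ and $f_*\cO_X=\cO_Y$, the filtration pushes forward to a filtration of $f_*E$ with quotients $\cO_Y$.

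\emph{Local.} If $F_X^{n*}f^*F\cong\cO_X^{r}$, then, because $f\circ F_X=F_Y\circ f$ for absolute Frobenius, we get $f^*F_Y^{n*}F\cong\cO_X^r$. Applying $f_*$ and the projection formula gives $F_Y^{n*}F\cong f_*f^*F_Y^{n*}F\cong\cO_Y^r$.

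\emph{Finite and Frobenius finite.} Since $f^*$ is a fully faithful tensor functor (Lemma~\ref{faith}) commuting with $\oplus$, $\otimes$ and $F^*$, an isomorphism $f(f^*F)\cong g(f^*F)$ is $f^*$ of an isomorphism $f(F)\cong g(F)$. Hence $F$ is finite, respectively Frobenius finite. For essentially (Frobenius) finite bundles, $E\cong E_2/E_1$ with $E_1\subset E_2\subset V$, all in $\cC^{NF}(X)$ and $V$ (Frobenius) finite. Applying the exact quasi-inverse $f_*$ gives $f_*E\cong f_*E_2/f_*E_1$ with $f_*E_1\subset f_*E_2\subset f_*V$, and $f_*V$ is (Frobenius) finite by the argument just given, since $f^*f_*V\cong V$.

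\emph{\'Etale trivializable.} This case I expect to be the main obstacle. Given a finite \'etale cover $\phi:P\to X$ trivializing $E$, one must produce a finite \'etale cover of $Y$ trivializing $f_*E$. The approach I would try first is to avoid covers altogether by using the Tannakian description. An object of $\cC^{NF}$ lies in $\cC^{\acute{e}t}$ precisely when the Tannakian subcategory it generates has finite \'etale monodromy group. The full subcategory $\langle E\rangle\subset\cC^{NF}(X)$ is identified via $f_*$ with $\langle f_*E\rangle\subset\cC^{NF}(Y)$ as tensor categories with compatible fibre functors, since $\omega_x\circ f^*=\omega_y$. Hence both have the same monodromy group, and $f_*E\in\cC^{\acute{e}t}(Y)$. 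The same monodromy argument gives an alternative uniform proof for $N$ (finite group) and $Loc$ (infinitesimal group), which I would use as a cross-check.
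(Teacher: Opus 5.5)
Your proof is correct. In the unipotent, local and saturated cases it coincides with the paper's. You reduce via Theorem~\ref{generalmor} to showing $f_*E\in\cC^*(Y)$, push filtrations forward using exactness of $f_*$ on $\cC^{NF}$ (the intermediate $E_i$ are numerically flat because that category is closed under extensions), and use that Frobenius commutes with $f$.

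Of your three routes for $EN,EF,ELoc,E\acute{e}t$, only the last one is available here, and it is exactly the paper's step (6). Proposition~\ref{saturationbir} needs smoothness, and through Corollary~\ref{birmor} and the vanishing of $R^jf_*\cO_X$ it also needs birationality; none of this is assumed. The same vanishing is what your second route would require, so drop the first two routes.

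The genuine differences are in two places.

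For (Frobenius) finite bundles, the paper pushes the relation $g(V)\cong h(V)$ forward, using that $f_*$ is a tensor functor commuting with Frobenius pullback. You instead pull back and use that the fully faithful $f^*$ reflects isomorphisms. Your version is marginally cleaner: $F_X^*f^*=f^*F_Y^*$ holds on the nose, whereas the paper's identity $f_*F_X^*E\cong F_Y^*f_*E$ is only justified for $E\in\cC^{NF}(X)$.

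For the \'etale case, the paper argues geometrically. It takes the Stein factorization $P\to Q\to Y$ of $f\circ\phi$ and uses $f'_*\cO_P=\cO_Q$ and the projection formula to trivialize $f_*E$ on $Q$. It asserts without proof that $Q\to Y$ is finite \'etale. This holds because $\phi_*\cO_P$ is numerically flat, so $Q=\underline{\mathrm{Spec}}_Y f_*\phi_*\cO_P$ satisfies $Q\times_Y X\cong P$, and \'etaleness descends along the surjection $f$.

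Your monodromy argument needs only the tensor equivalence $f^*$ on $\cC^{NF}$ compatible with fibre functors, and it is uniform: it handles any subcategory of $\cC^{NF}$ cut out by a property of monodromy groups. The cost is that it imports the characterization of $\cC^{\acute{e}t}$ inside $\cC^{NF}$ by finite \'etale monodromy. One direction is Nori's torsor construction; the other is pro-\'etaleness of $\pi(\cC^{\acute{e}t}(X),x)$, which over a non-closed field is a nontrivial input. The paper's cover-theoretic argument avoids that input.
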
 
\begin{proof}
By Theorem~\ref{generalmor}, $f_*:\cC^{NF}(X) \rightarrow \cC^{NF}(Y)$ is an exact tensor functor, and for any $E \in \cC^{NF}(X)$, $f_* E \in \cC^{NF}(Y)$ and the natural morphism $f^*f_*E \rightarrow E$ is an isomorphism. 

Let $* \in \{N, EN, F, EF, Loc, ELoc, \acute{e}t, E\acute{e}t, uni\},$ we have $\cC^{*}(X) \subseteq \cC^{NF}(X)$ and $\cC^{*}(Y) \subseteq \cC^{NF}(Y)$, and the natural morphism $f^*f_*E \rightarrow E$ is an isomorphism for any $E \in \cC^*(X)$. In order to prove that the natural homomorphism $\pi^{*}(X,x)\rightarrow \pi^{*}(Y,y)$ is an isomorphism, we only need to show that $f_*E \in \cC^*(Y)$ for any $E \in \cC^*(X)$ by Theorem~\ref{generalmor}.

If $\operatorname{char} (k) >0$, $F_X:X\rightarrow X$ is the absolute Frobenius morphism. Then for any $E \in \Vect(X)$, we have 
\[
f_*F^*_X E  \cong f_*F^*_X f^*f_* E \cong f_*f^*F^*_Y f_*E = f_*(f^* F^*_Yf_* E \otimes_{\cO_X}\cO_X) \cong F^*_Y f_*E \otimes_{\cO_Y} f_*\cO_X = F^*_Y f_*E.
\]
Hence pushforward commutes with Frobenius pullback.

(1) For the case $*= N$.

If $F$ is a finite bundle on $X$, then there exist distinct $g(t),h(t)\in\mathbb{N}[t]$ such that $g(F)\cong h(F)$. Since $f_*$ is  a  tensor functor, we have 
    $g(f_*F)\cong f_*g(F)\cong f_*h(F)\cong h(f_*F)$, 
i.e. $f_*F$ is a finite bundle on $Y$.

If $E\in \cC^N(X)$, then $E\cong E_1/E_2$, where $E_2\hookrightarrow E_1\hookrightarrow F$, $E_1,E_2\in \cC^{NF}(X)$, $F$ is a finite bundle on $X$. Then we have $f_*E_2\hookrightarrow f_*E_1\hookrightarrow f_*F$, where $f_*E_1,f_*E_2\in \cC^{NF}(Y)$ and $f_*F$ is a finite bundle on $Y$. Hence $f_*E\cong f_*E_1/f_*E_2\in \cC^N(Y)$.

(2) For the case $*= F$.

If $F$ is a Frobenius finite bundle on $X$, then there exist distinct $g(t),h(t)\in\mathbb{N}[t]$ such that $\tilde{g}(F)\cong \tilde{h}(F)$. 
It follows that $\tilde{g}(f_*F)\cong f_*\tilde{g}(F)\cong f_*\tilde{h}(F)\cong \tilde{h}(f_*F),$
i.e. $f_*F$ is a Frobenius finite bundle on $Y$.

If $E\in\cC^F(X)$, then $E\cong E_1/E_2$, where $E_2\hookrightarrow E_1\hookrightarrow F$, $E_1,E_2\in \cC^{NF}(X)$ and $F$ is a Frobenius finite bundle on $X$. Then we have $f_*E_2\hookrightarrow f_*E_1\hookrightarrow f_*F$, where $f_*E_1,f_*E_2\in\cC^{NF}(Y)$ and $f_*F$ is a Frobenius finite bundle on $Y$. Hence $f_*E\cong f_*E_1/f_*E_2\in\cC^F(Y)$.

(3) For the case $*= Loc$. 

If $E\in\cC^{Loc}(X)$,  there exists $n$ such that $F_X^{n*}E\cong \cO_X^{\oplus r}$. We have 
$F_{Y}^{n*}f_*E \cong f_*F_{X}^{n*}E \cong f_*\cO_X^{\oplus r}\cong \cO_Y^{\oplus r}$.
Hence $f_*E\in\cC^{Loc}(Y)$.

(4) For the case $*= \acute{e}t$.
 
 If $E\in\cC^{\acute{e}t}(X)$, then there exists a finite \'etale covering $\phi:P\rightarrow X$ such that $\phi^*E\cong \cO_P^{\oplus r}$. Consider the following commutative diagram:
    \[
\begin{tikzcd}
P \arrow[r, "\phi"] \arrow[d, "f'"] \arrow[rd] & X \arrow[d, "f"] \\
Q \arrow[r, "\phi'"]                     & Y               
\end{tikzcd}
    \]
where $P \rightarrow Q \rightarrow Y$ is the Stein factorization of the composite morphism $P \rightarrow X \rightarrow Y$.
Then $\phi':Q\rightarrow Y$ is a finite \'etale covering, $f'_*\cO_P = \cO_Q$ and $f'^*\phi'^*f_*E\cong \phi^*f^*f_*E \cong  \phi^*E \cong \cO_{P}^{\oplus r}$.
Applying $f'_*$, by projection formula, we have 
    \[
    \phi'^*f_*E \cong \phi'^*f_*E \otimes_{\cO_Q}f'_*\cO_P \cong f'_*(f'^*\phi'^*f_*E \otimes_{\cO_P}\cO_P) \cong f'_*f'^*\phi'^*f_*E  \cong f'_*\cO_{P}^{\oplus r} \cong \cO_{Q}^{\oplus r}.
    \]
Hence $f_*E \in \cC^{\acute{e}t}(Y)$.

(5) For the case $* = uni$.

If $E\in\cC^{uni}(X)$, there exists a filtration
    \[
    0\hookrightarrow E_1\hookrightarrow \cdots\hookrightarrow E_n=E,
    \]
such that $E_{i+1}/E_i\cong\mathcal{O}_X$ for any $i$. Applying the functor $f_*$, we obtain a filtration of $f_*E$:
    \[
    0\hookrightarrow f_*E_1\hookrightarrow \cdots\hookrightarrow f_*E_n=f_*E,
    \]
such that $f_*E_{i+1}/f_*E_i\cong f_*(E_{i+1}/E_i)\cong f_*\cO_X =\cO_Y$ for any $i$. Hence $f_*E\in\cC^{uni}(Y)$.

(6) For the case $* \in \{EN, EF, ELoc, E\acute{e}t \}$.

Let $\bullet \in \{N, F, Loc,\acute{e}t\}$. For any $E \in \overline{\cC^{\bullet}(X)}$, consider the filtration
\[
    0\hookrightarrow E_1 \hookrightarrow \cdots\hookrightarrow E_n=E,
\]
such that $E^i=E_{i+1}/E_i\in\cC^{\bullet}(X)$ for any $i$. Applying $f_*$,
we obtain a filtration of $f_*E$:
    \[
    0\hookrightarrow f_*E_1\hookrightarrow \cdots\hookrightarrow f_*E_n=f_*E,
    \]
such that $f_*E_{i+1}/f_*E_i\cong f_*(E_{i+1}/E_i) =f_*(E^i) \in \cC^{\bullet}(Y)$ for any $i$. Hence $f_*E \in \overline{\cC^{\bullet}(Y)}$.
\end{proof}

\begin{Corollary}\label{numericalinv}
    Let $k$ be a perfect field, $f:X\rightarrow Y$ a birational morphism between normal varieties proper over $k$ with $Y$ smooth. Then for any $* \in \{S, N, EN, F, EF, Loc, ELoc, \acute{e}t, E\acute{e}t, uni\},$ the natural homomorphism $\pi^{*}(X,x)\rightarrow \pi^{*}(Y,y)$ is an isomorphism.
\end{Corollary}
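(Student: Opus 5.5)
By Proposition~\ref{birinv:underS}, it suffices to treat the case $*=S$. The hypotheses needed there are that $f_*\cO_X=\cO_Y$ and that $f^*$ induces $\pi^S(X,x)\cong\pi^S(Y,y)$. Since $Y$ is smooth, hence normal, Corollary~\ref{structuresheaves} gives $f_*\cO_X=\cO_Y$. By Corollary~\ref{birmor}, the $S$-case reduces to showing that $f_*E\in\cC^{NF}(Y)$ for every numerically flat $E$ on $X$. Corollary~\ref{birmor} then also yields $f^*f_*E\cong E$.

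\textbf{Step 1: $f_*E$ is locally free.} Fix $E\in\cC^{NF}(X)$.
\begin{itemize}
\item By Lemma~\ref{basechange} and flat base change for $f_*$, both the hypothesis and the conclusion may be checked after passing to $\bar k$. Since $k$ is perfect, $X_{\bar k}$ remains normal and $Y_{\bar k}$ remains smooth, so we may assume $k$ algebraically closed.
\item Let $U\subset Y$ be the open locus over which $f$ is an isomorphism. Since $Y$ is normal, Zariski's main theorem shows that its complement has codimension $\ge 2$.
\item $f_*E$ is torsion free and coherent. Its double dual $F:=(f_*E)^{\vee\vee}$ is reflexive and agrees with $f_*E$ on $U$.
\item Following Hogadi--Mehta, the first goal is to prove that each fibre $X_s$ has $E|_{X_s}$ trivial. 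The fibres are connected because $f_*\cO_X=\cO_Y$. Numerical flatness restricts to fibres, and each fibre is rationally chain connected (Abhyankar/Kollár for birational morphisms to a smooth target, or directly via the exceptional locus being covered by rational curves). A numerically flat bundle on a rational curve is trivial, which gives triviality on the fibres.
\item Triviality on the fibres makes $h^0(X_s,E|_{X_s})=r$ constant in $s$. Grauert's theorem, applied with $Y$ reduced, then shows that $f_*E$ is locally free of rank $r$ and satisfies base change at every point.
\end{itemize}

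\textbf{Step 2: $f_*E$ is numerically flat.} The natural map $f^*f_*E\to E$ is a map of rank-$r$ bundles. It is an isomorphism on each fibre, because the fibre restrictions are trivial and base change holds. Hence $f^*f_*E\cong E$. Now $f$ is surjective, and nefness of a bundle can be tested after pullback by a surjective proper morphism. Since $f^*(f_*E)\cong E$ and $f^*(f_*E)^\vee\cong E^\vee$ are both nef, $f_*E$ and its dual are nef, so $f_*E\in\cC^{NF}(Y)$.

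\textbf{Conclusion.} Corollary~\ref{birmor} now gives $\pi^S(X,x)\cong\pi^S(Y,y)$, and Proposition~\ref{birinv:underS} extends this to every $*\in\{N,EN,F,EF,Loc,ELoc,\acute{e}t,E\acute{e}t,uni\}$.

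\textbf{Main obstacle.} The crux is the fibrewise triviality in Step~1, namely that every fibre of $f$ is chain connected by rational curves. A cleaner alternative I would try first, following Hogadi--Mehta, avoids this. Take $F=(f_*E)^{\vee\vee}$ and compare $f^*F$ with $E$ off a codimension-$\ge2$ set. Then use the fact that a reflexive sheaf on a smooth variety that is numerically flat in codimension one, having vanishing $c_1$ and $c_2$, is locally free and flat. This is the step where smoothness of $Y$ is essential.
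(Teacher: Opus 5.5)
Your reduction is the same as the paper's. Proposition~\ref{birinv:underS} reduces everything to $*=S$, Corollary~\ref{structuresheaves} gives $f_*\cO_X=\cO_Y$, and Corollary~\ref{birmor} reduces the $S$-case to showing $f_*E\in\cC^{NF}(Y)$. Flat base change together with Lemma~\ref{basechange} then lets you work over $\bar k$. At that point the paper obtains $(f_{\bar k})_*(E\otimes_k\bar k)\in\cC^{NF}(Y_{\bar k})$ by invoking the argument of \cite[Theorem~1.2]{HM11}. You instead try to prove this input yourself, and Step~1 does not work as written.

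The inference ``a numerically flat bundle on a rational curve is trivial, which gives triviality on the fibres'' is not valid. Rational chain connectedness of a fibre, together with triviality on rational curves, does not force triviality on the fibre. For example, a nodal cubic $Z$ is itself a rational curve. Gluing $\cO_{\mathbb{P}^1}$ at the two preimages of the node by a scalar $\lambda\neq 1$ gives a degree-$0$ line bundle on $Z$ that is numerically flat and nontrivial, yet trivial on the normalization $\mathbb{P}^1$. Cycles of $\mathbb{P}^1$'s behave the same way. Trivializations along the curves of a chain need not glue, so you need genuinely additional input about the fibres of $f$ or about $E$. This is precisely the part the paper takes from \cite{HM11}.

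Even granting fibrewise triviality, the Grauert step fails. Grauert's theorem needs $E$ to be flat over $Y$. Since $E$ is locally free on $X$, that would make $f$ flat, and a flat proper birational morphism onto a normal $Y$ is quasi-finite, hence finite, hence an isomorphism. Moreover, rational curves only see the reduced fibre $(X_s)_{\mathrm{red}}$. Base change at $s$ and local freeness of $f_*E$, by contrast, concern the scheme-theoretic fibre and, through the theorem on formal functions, all its infinitesimal thickenings. So neither local freeness of $f_*E$ nor base change is established, and your Step~2 uses base change to get $f^*f_*E\cong E$. The nefness descent along the surjective $f$ in Step~2 is fine.

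Your fallback via $F=(f_*E)^{\vee\vee}$ and a Langer-type criterion is only a sketch. You would still have to prove that $F$ is strongly semistable with $c_1(F)\cdot H^{n-1}=0$ and $c_2(F)\cdot H^{n-2}=0$. Also, $f^*F$ and $E$ are only known to agree off $f^{-1}(Y\setminus U)$, which in $X$ is typically a divisor, not a codimension-$\geq 2$ set.

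The clean repair is to do what the paper does: cite the Hogadi--Mehta argument for $f_{\bar k}$ (with $X_{\bar k}$ normal and $Y_{\bar k}$ smooth). With that, the rest of your argument goes through.
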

\begin{proof}
    Let $\bar{k}$ be an algebraic closure of $k$. Consider the Cartesian diagram

\[
\begin{tikzcd}
X_{\bar{k}} 
  \arrow[r, "p_X"] 
  \arrow[d, "f_{\bar{k}}"'] 
  \arrow[dr, phantom, "\ulcorner", very near start]
& X 
  \arrow[d, "f"] \\
Y_{\bar{k}} 
  \arrow[r, "p_Y"] 
  \arrow[d]
  \arrow[dr, phantom, "\ulcorner", very near start]
& Y 
  \arrow[d] \\
\Spec \bar{k} 
  \arrow[r] 
& \Spec k .
\end{tikzcd}
\]
For any $E \in \cC^{NF}(X)$, we have $E \otimes_k \bar{k} \in \cC^{NF}(X_{\bar{k}})$. Since $k$ is perfect, we have $\bar{k}/k$ is a separable extension and $X_{\bar{k}}$ normal. The proof of \cite[Theorem~1.2]{HM11} also works for $f_{\bar{k}}: X_{\bar{k}} \rightarrow Y_{\bar{k}}$, hence we have $(f_{\bar{k}})_*(E \otimes_k \bar{k}) \in\cC^{NF}(Y_{\bar{k}})$. 
Since $f$ is proper and $p^*_Y$ is flat, by flat base change, we have
\[
(f_*E) \otimes_k \bar{k}= p_Y^*(f_*E) \cong (f_{\bar{k}})_*(p_X^*E)= (f_{\bar{k}})_* (E \otimes_k \bar{k}) \in \cC^{NF}(Y_{\bar{k}}).
\]
 By Lemma~\ref{basechange}, we have $f_*E \in \cC^{NF}(Y)$. 
Then  the natural homomorphism $\pi^{S}(X,x)\rightarrow \pi^{S}(Y,y)$ is an isomorphism by Corollary~\ref{birmor}. 

By Proposition~\ref{birinv:underS}, the natural homomorphism $\pi^{*}(X,x)\rightarrow \pi^{*}(Y,y)$ is an isomorphism for any $* \in \{N, EN, F, EF, Loc, ELoc, \acute{e}t, E\acute{e}t, uni\}$.
\end{proof}

\begin{Theorem}\label{birinvariance}
Let $k$ be a perfect field, $f \colon X \dashrightarrow Y$
be a birational map between smooth projective varieties over $k$, $x \in X(k)$ lying over $y \in Y(k)$, $* \in \{S, N, EN, F, EF, Loc, ELoc, \acute{e}t, E\acute{e}t, uni\}$. 
Then there exists a natural isomorphism
$\pi^{*}(X,x)\cong \pi^{*}(Y,y)$.
\end{Theorem}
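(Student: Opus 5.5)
The plan is to reduce the birational map to birational morphisms by resolving its indeterminacy through the closure of the graph, and then apply Corollary~\ref{numericalinv} to each of the two projections.

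\emph{Step 1: the graph.} Let $U\subset X$ be the open set on which $f$ is defined. Let $Z\subset X\times_k Y$ be the closure of the graph of $f|_U$, with projections $p\colon Z\to X$ and $q\colon Z\to Y$. Both are birational morphisms of integral projective $k$-schemes, and $X$ and $Y$ are smooth, hence normal.

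\emph{Step 2: a normal model.} Corollary~\ref{numericalinv} requires the source to be a normal variety, so replace $Z$ by its normalization $\widetilde Z$. This is finite over $Z$ and hence projective over $k$. The composites $\tilde p\colon\widetilde Z\to X$ and $\tilde q\colon \widetilde Z\to Y$ are birational morphisms from a normal proper variety to a smooth one.

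\emph{Step 3: base points.} I expect this to be the main obstacle. The statement says ``$x\in X(k)$ lying over $y\in Y(k)$'', which I interpret as follows: there is a point $z\in\widetilde Z(k)$ with $\tilde p(z)=x$ and $\tilde q(z)=y$. For example, if $x\in U$ and $y=f(x)$, then the fibre of $\widetilde Z\to Z$ over the graph point is a single $k$-point, because $Z\to X$ is an isomorphism over $U$ and $X$ is normal.

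If only $x$ and $y$ are given, I would use the standard fact that changing the base point changes the Tannaka group by an inner twist. Over a non-closed field the groups need not be isomorphic, so I would insist on the interpretation above, where a compatible $k$-point $z$ exists.

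\emph{Step 4: conclusion.} Apply Corollary~\ref{numericalinv} to $\tilde p$ with base points $z\mapsto x$, and to $\tilde q$ with base points $z\mapsto y$. This gives isomorphisms
\[
\pi^{*}(\widetilde Z,z)\xrightarrow{\ \sim\ }\pi^{*}(X,x)
\quad\text{and}\quad
\pi^{*}(\widetilde Z,z)\xrightarrow{\ \sim\ }\pi^{*}(Y,y)
\]
for every $*$ in the list. Composing the inverse of the first with the second gives $\pi^*(X,x)\cong\pi^*(Y,y)$.

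To justify calling this isomorphism natural, I would check that it does not depend on the choice of normal resolution $\widetilde Z$. Any two such resolutions are dominated by the normalization of the graph closure in $\widetilde Z_1\times_k \widetilde Z_2$. All the maps are birational morphisms to normal varieties and are compatible with the projections, so the induced isomorphisms commute.
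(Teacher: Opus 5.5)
Your proof is correct and follows the paper's argument. Like the paper, you normalize the closure of the graph, take $z$ over both $x$ and $y$ (the paper likewise simply assumes such a $k$-point exists), and apply Corollary~\ref{numericalinv} to the two projections. Your extra independence-of-model check is not in the paper, and if you keep it, it needs two repairs: apply Corollary~\ref{numericalinv} only to the composites $W\to X$ and $W\to Y$, since the $\widetilde Z_i$ are merely normal and not smooth; and note that the check presupposes a $k$-point of $W$ over both $z_1$ and $z_2$, which need not exist for arbitrary choices.
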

\begin{proof}
Let $Z$ be the normalization of the closure of graph of $f$, $z \in Z(k)$ lying over $x$ and $y$. Consider the commutative diagram
\[
\begin{tikzcd}
& Z \arrow[ld, "p"'] \arrow[rd, "q"] & \\
X \arrow[rr, dashed, "f"'] & & Y.
\end{tikzcd}
\]  
 Let $* \in \{S, N, EN, F, EF, Loc, ELoc, \acute{e}t, E\acute{e}t, uni \}$.
 The natural homomorphisms $\pi^*(Z, z) \rightarrow \pi^*(X, x)$ and $\pi^*(Z, z) \rightarrow \pi^*(Y, y)$ are isomorphisms by Corollary~\ref{numericalinv}. Hence there exists a natural isomorphism
$\pi^{*}(X,x)\cong \pi^{*}(Y,y)$.
\end{proof}

\begin{Corollary}\label{cor:stratified}
Let $k$ be a perfect field of characteristic $p>0$, and let $f: X\rightarrow Y$
be a birational morphism between smooth projective  varieties over $k$, $x \in X(k)$ lying over $y \in Y(k)$.  Then the induced homomorphism
$\pi^{str}(X,x)\rightarrow \pi^{str}(Y,y)$
is also an isomorphism.
\end{Corollary}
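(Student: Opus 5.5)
We apply Corollary~\ref{birmor} to the Tannakian category $\cC^{str}$. Since $f$ is a birational morphism of smooth projective varieties, it is a birational morphism of integral proper schemes with $Y$ normal, and Corollary~\ref{structuresheaves} gives $f_*\cO_X=\cO_Y$. By criterion (3) of Corollary~\ref{birmor}, it suffices to show that for every $F$-divisible bundle $\mathcal{E}=(E_i,\sigma_i)_{i\ge 0}$ on $X$, the pushforward $f_*\mathcal{E}:=(f_*E_i, f_*\sigma_i)$ is again an $F$-divisible bundle on $Y$. Here pushforward of objects of $\cC^{str}(X)$ means componentwise pushforward, and the fibre functor is $\mathcal{E}\mapsto E_0|_x$.

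\textbf{Step 1: each $E_i$ is numerically flat.} Over a perfect field (after base change to $\bar k$ as in Corollary~\ref{numericalinv}, via Lemma~\ref{basechange}), the bundle $E_0 \cong (F_X^{n})^*E_n$ is an $n$-th Frobenius pullback for every $n$. Restricting to any smooth projective curve $C\to X$, the degree of $E_0|_C$ is divisible by $p^n$ for all $n$, so it is $0$. Moreover, any destabilizing quotient of $E_0|_C$ would give slopes that are unbounded after Frobenius descent, contradicting boundedness of slopes (the Langer / dos Santos argument). Hence $E_0$, and likewise each $E_i$, is Nori semistable, i.e.\ numerically flat. This is the step I expect to be the main obstacle, since it needs the known result that $F$-divisible bundles are numerically flat, which the paper has not stated. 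I would cite dos Santos \cite{dosSantos07} for it (stratified bundles are semistable with vanishing Chern classes), and otherwise run the curve-by-curve argument above.

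\textbf{Step 2: transfer the structure.} By Corollary~\ref{numericalinv}, $\pi^S(X,x)\cong\pi^S(Y,y)$. Theorem~\ref{generalmor} then gives, for every $E\in\cC^{NF}(X)$, that $f_*E\in\cC^{NF}(Y)\subset\Vect(Y)$ and $f^*f_*E\cong E$. In particular each $f_*E_i$ is a vector bundle. The computation in the proof of Proposition~\ref{birinv:underS} shows $f_*F_X^*E\cong F_Y^*f_*E$ for such $E$. Thus $\sigma_i$ induces isomorphisms
\[
F_Y^*f_*E_{i+1}\cong f_*F_X^*E_{i+1}\xrightarrow{f_*\sigma_i} f_*E_i,
\]
so $f_*\mathcal{E}$ is an $F$-divisible bundle on $Y$.

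\textbf{Step 3: conclude.} The functors $f_*$ and $f^*$ are quasi-inverse on each component, since $f^*f_*E_i\cong E_i$ and $f_*f^*G\cong G$ by the projection formula. These isomorphisms are compatible with the $\sigma_i$ by naturality, so condition (3) of Corollary~\ref{birmor} holds for $\cC^{str}$, and $\pi^{str}(X,x)\to\pi^{str}(Y,y)$ is an isomorphism. Alternatively, one can conclude directly: $f^*$ is fully faithful by Lemma~\ref{faith} applied componentwise, and essentially surjective by the above.

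Corollary~\ref{birmor} is stated for Tannakian categories whose objects are vector bundles, whereas $F$-divisible bundles are sequences. So I would either check that its proof goes through verbatim, using that $\cC^{str}(X)$ is abelian with exact forgetful functor to $E_0$, or identify $\cC^{str}(X)$ with a category of bundles $E_0$ together with extra data.
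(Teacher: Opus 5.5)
\textbf{There is a genuine gap in Step 1, and the whole argument rests on it.} Steps 2 and 3 are fine \emph{provided} every component $E_i$ of an $F$-divisible bundle is numerically flat. In that case Corollary~\ref{numericalinv} and Theorem~\ref{generalmor} give $E_i\cong f^*f_*E_i$ and $F_Y^*f_*E_{i+1}\cong f_*F_X^*E_{i+1}$. The $\sigma_i$ then descend by full faithfulness of $f^*$ (Lemma~\ref{faith}), and $f^*\colon\cC^{str}(Y)\to\cC^{str}(X)$ is a tensor equivalence compatible with the fibre functors. Your direct full-faithfulness plus essential-surjectivity argument suffices, so you do not need Corollary~\ref{birmor} for sequences.

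\textbf{Your curve argument only proves semistability for $n\gg0$.} From $E_0|_C\cong F_C^{n*}(E_n|_C)$ the only valid inequality is $\mu_{\max}(E_0|_C)\ge p^n\mu_{\max}(E_n|_C)$. A destabilizing subsheaf of $E_n|_C$ pulls back to one of $E_0|_C$ with slope multiplied by $p^n$. But a destabilizing subsheaf or quotient of $E_0|_C$ has no reason to descend to $E_n|_C$, and if it did its slope would shrink, not blow up. Together with $\deg=0$, this shows only that $E_n|_C$ is semistable for $n\gg0$. That is also the statement one finds in the literature (Esnault--Mehta: $E_n$ is $\mu$-semistable with vanishing Chern classes for $n\ge n_0$). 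You would have to check that \cite{dosSantos07} really gives more, and I do not believe it does.

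\textbf{Numerical flatness needs strong semistability, and shifting does not help.} Numerical flatness requires semistability of $E_0|_C$ and of all its Frobenius pullbacks, for every curve $C\to X$. In genus $\ge2$, semistable degree-$0$ bundles can be Frobenius-destabilized, so semistability of $E_n|_C$ for large $n$ says nothing about this. You also cannot discard finitely many terms of the sequence: $E_0$ is numerically flat if and only if some (equivalently every) $E_n$ is. This is because nefness is preserved and detected by the finite surjective map $F^n$. So ``numerically flat for $n\gg0$'' is no weaker than the full claim, and semistability for $n\gg0$ does not reach it. Until you have an actual theorem that every $F$-divisible bundle on a smooth projective variety has numerically flat components, the Hogadi--Mehta input \cite{HM11} has nothing to act on.

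\textbf{How the paper avoids this.} It never relates $F$-divisible bundles to numerically flat ones. It uses only the birational invariance of $\pi^{\acute{e}t}$ (Corollary~\ref{numericalinv}) and then invokes \cite[Theorem~II]{SZ25}: a morphism inducing an isomorphism on $\pi^{\acute{e}t}$ induces one on $\pi^{str}$. That is a substantial external input specific to $F$-divided sheaves, and it replaces exactly the step your proposal leaves unproved.
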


\begin{proof}
By Corollary~\ref{numericalinv},  the induced homomorphism $\pi^{\acute{e}t}(X,x)\rightarrow \pi^{\acute{e}t}(Y,y)$ is an isomorphism. Then by \cite[Theorem II]{SZ25}, the induced homomorphism
$\pi^{str}(X, x)\rightarrow \pi^{str}(Y, y)$
is also an isomorphism.
\end{proof}

\section*{Acknowledgements}
The first author is partially supported by Applied Basic Research Programs of Science and Technology Commission Foundation of Shanghai Municipality(22JC1402700), National Natural Science Foundation of China(Grant No. 12171352).


\begin{thebibliography}{99}

\bibitem{AdAm25} P. Adroja and S. Amrutiya, \emph{On an extension of Nori and local fundamental group schemes}, Comm. Algebra {\bf 53} (2025), no.~10, 4241--4255.

\bibitem{AmBi10} S. Amrutiya, I. Biswas, \emph{On the F-fundamental group scheme}, Bull. Sci. Math. {\bf 134} (2010), no.~5, 461--474.

\bibitem{Am20}
S.~Amrutiya,
\emph{A note on certain Tannakian group schemes},
Archivum Mathematicum \textbf{56} (2020), no.~1, 21--29.

\bibitem{CR11}
A.~Chatzistamatiou and K.~R\"ulling,
\emph{ Higher direct images of the structure sheaf in positive characteristic}, Algebra Number Theory. \textbf{5} (2011), no.~6, 693--775.

\bibitem{CLM22} R. Cheng, C. Lian and T. Murayama, Projectivity of the moduli of curves, in {\it Stacks Project Expository Collection (SPEC)}, 1--43, London Math. Soc. Lecture Note Ser., 480, Cambridge Univ. Press, Cambridge (2022).

\bibitem{dosSantos07}
J.~P.~dos Santos,
\newblock {\em Fundamental group schemes for stratified sheaves},
\newblock J. Algebra \textbf{317} (2007), no.~2, 691--713.

\bibitem{Gro60} A. Grothendieck, \emph{Revêtements étales et groupe fondamental}, Séminaire de Géométrie Algébrique du Bois-Marie, (SGA 1), 1960/61, Lecture Notes in Mathematics, Vol.~224, Springer-Verlag, Berlin (1971).


\bibitem{HM11} A. Hogadi and V.~B. Mehta, Birational invariance of the $S$-fundamental group scheme, Pure Appl. Math. Q. {\bf 7} (2011), no.~4, Special Issue: In memory of Eckart Viehweg, 1361--1369.

\bibitem{Lan11} A. Langer, \emph{On the S-fundamental group scheme}, Ann. Inst. Fourier (Grenoble) {\bf 61} (2011), no.~5, 2077--2119 (2012).

\bibitem{LiTian26}
L. Li, N. Tian, \emph{The K\"unneth Formula of Fundamental Group Schemes},
\url{https://doi.org/10.48550/arXiv.2602.14207}.

\bibitem{MeSu08} V.~B. Mehta and S. Subramanian, \emph{Some remarks on the local fundamental group scheme}, Proc. Indian Acad. Sci. Math. Sci. {\bf 118} (2008), no.~2, 207--211.

\bibitem{Nor82} M. V. Nori, \emph{The fundamental group-scheme}, Proc. Indian Acad. Sci. Math. Sci. {\bf 91} (1982), no.~2, 73--122.

\bibitem{Ota17} S. Otabe, \emph{An extension of Nori fundamental group}, Comm. Algebra {\bf 45} (2017), no.~8, 3422--3448.

\bibitem{stacks} The {Stacks project authors}, \emph{The Stacks project}, \url{https://stacks.math.columbia.edu}, 2026.

\bibitem{SZ25}
X.~Sun and L.~Zhang,
{\em The \'etale fundamental group and $F$-divided sheaves in characteristic $p>0$},
\url{https://doi.org/10.48550/arXiv.2509.24360}.

\bibitem{Yeku19}
A.~Yekutieli,
\emph{ Derived Categories},
Cambridge Studies in Advanced Mathematics, Vol.~183,
Cambridge University Press (2019).

\end{thebibliography}
\end{document}